\theoremstyle{plain}
\newtheorem{prop}{Proposition}[section]
\newtheorem{coro}[prop]{Corollary}
\newtheorem{conj}[prop]{Conjecture}
\newtheorem{lemm}[prop]{Lemma}
\newtheorem{thm}[prop]{Theorem}
\newtheorem{ex}[prop]{Example}
\theoremstyle{definition}
\newtheorem{defn}[prop]{Definition}
\newtheorem{rem}[prop]{Remark}
\DeclareMathOperator{\sign}{sign}
\def\mcg#1;#2{\Gamma_{#1,#2}}
\def\fg#1;#2{\Pi_{#1,#2}}
\def\tb#1;#2{\mathscr{K}_{\frac{#1}{#2}}}
\begin{document}

\title[Characterization of Quasi-alternating Montesinos Links]
{Characterization of Quasi-alternating Montesinos Links}

\keywords{quasi-alternating links, determinant, Montesinos links}

\author{Khaled Qazaqzeh}
\address{Department of Mathematics\\ Yarmouk University\\
Irbid, Jordan, 21163}
\email{qazaqzeh@yu.edu.jo}
\urladdr{http://faculty.yu.edu.jo/qazaqzeh}

\author{Nafaa Chbili}
\address{Department of Mathematical Sciences\\ College of Science UAE University \\ 17551 Al Ain, U.A.E.}
\email{nafaachbili@uaeu.ac.ae}
\urladdr{http://faculty.uaeu.ac.ae/nafaachbili}

\author{Balkees Qublan}
\address{Department of Mathematics\\ Yarmouk University\\
Irbid, Jordan, 21163}
\email{balkesqublan@yahoo.com}

\thanks{The first and the third authors are supported by a grant from Yarmouk University.}
\date{19/05/2012}

\begin{abstract}
We construct an infinite family of quasi-alternating links from a given quasi-alternating link by replacing a crossing by a product of rational tangles each of which extends that crossing. Consequently, we determine an infinite family of quasi-alternating Montesinos links. This family contains all the classes of quasi-alternating Montesinos links that have been detected by Widmar in \cite{W}. We conjecture that this family contains all quasi-alternating Montesinos links up to mirror image that are not alternating and this will characterize all quasi-alternating Montesinos links.
\end{abstract}

\maketitle

\section{introduction}

Quasi-alternating links were introduced by Ozsvath and Szabo in \cite[Definition.\,3.9]{OS} as a natural generalization of alternating links. The definition is given in a recursive way:

\begin{defn}
The set $\mathcal{Q}$ of quasi-alternating links is the smallest set
satisfying the following properties:
\begin{itemize}
	\item The unknot belongs to $\mathcal{Q}$.
  \item If $L$ is link with a diagram containing a crossing $c$ such that
\begin{enumerate}
\item both smoothings of the diagram of $L$ at the crossing $c$, $L_{0}$ and $L_{1}$ as in figure \ref{figure} belong to $\mathcal{Q}$, 
\item $\det(L_{0}), \det(L_{1}) \geq 1$,
\item $\det(L) = \det(L_{0}) + \det(L_{1})$; then $L$ is in $\mathcal{Q}$ and in this case we say $L$ is quasi-alternating at the crossing $c$.
\end{enumerate}
\end{itemize}
\end{defn}

\begin{figure} [h]
\begin{center}
\includegraphics[width=10cm,height=2cm]{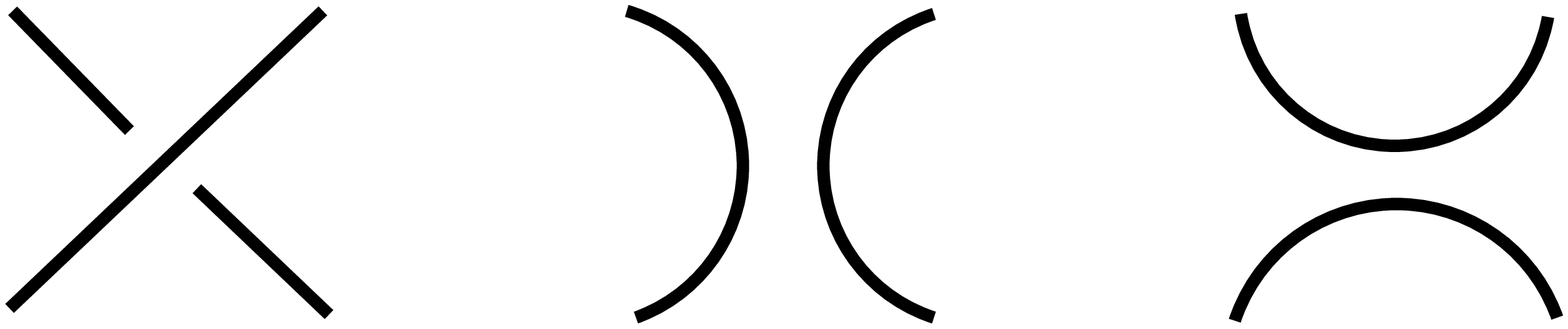} \\
{$L$}\hspace{3.5cm}{$L_0$}\hspace{3.5cm}$L_{1}$
\end{center}
\caption{The link diagram $L$ at crossing $c$ with $\sign(c) = -1$ and its smoothings $L_{0}$ and $L_{1}$ respectively.}\label{figure}
\end{figure}

Studying quasi-alternating links becomes an interesting problem in the last few years. It has been shown in \cite{MO} that they are homologically thin for both Khovanov homology and knot Floer homology. In other words, their Khovanov homology and knot Floer homology depend only on the Jones and the Alexander polynomials respectively and the signature of the given link.

The class of quasi-alternating links contains the class of alternating links as it has been shown in \cite[Lemma.\,3.2]{OS}. Champanerkar and Kofman in \cite{CK} determine an infinite family of quasi-alternating pretzel links by twisting any known quasi-alternating pretzel link. Later on, Greene in \cite{G} characterized all quasi-alternating pretzel links. In this paper, we generalize the technique of twisting to construct an infinite family of quasi-alternating Montesinos links. This infinite family include all classes of quasi-alternating Montesinos links detected by Widmar in \cite{W}. Moreover, we conjecture that this family contains all quasi-alternating Montesinos links up to mirror image that are not alternating.

\section{Montesinos Links}

Montesinos links are natural generalization of rational links. They have been classified first by the authors of \cite{BoSi} and for further details and more recent reference see \cite{BZ}.

\begin{defn}
A Montesinos link has a diagram as shown in figure \ref{figure1}. In this diagram, $e$ is an integer that represents number of half twists and the box \fbox{$\alpha_{i}, \beta_{i}$} stands for a rational tangle with slope $\frac{\beta_{i}}{\alpha_{i}}$ of two coprime integers $\alpha_{i} > 1$ and $\beta_{i}$ whose continued fraction is given by
\[
\frac{\beta_{i}}{\alpha_{i}} = \cfrac{1}{a_{n}+\cfrac{1}{a_{n-1}+\cfrac{1}{\ddots +\cfrac{1}{a_{1}}}}}.
\]
The continued fraction for any rational number $\frac{\beta}{\alpha}$ will be denoted by $[a_{1},a_{2},\ldots,a_{n}]$ from now on.
\end{defn}

\begin{figure}[htbp]
	\begin{center}
  \includegraphics[width=6.5cm, height=6cm]{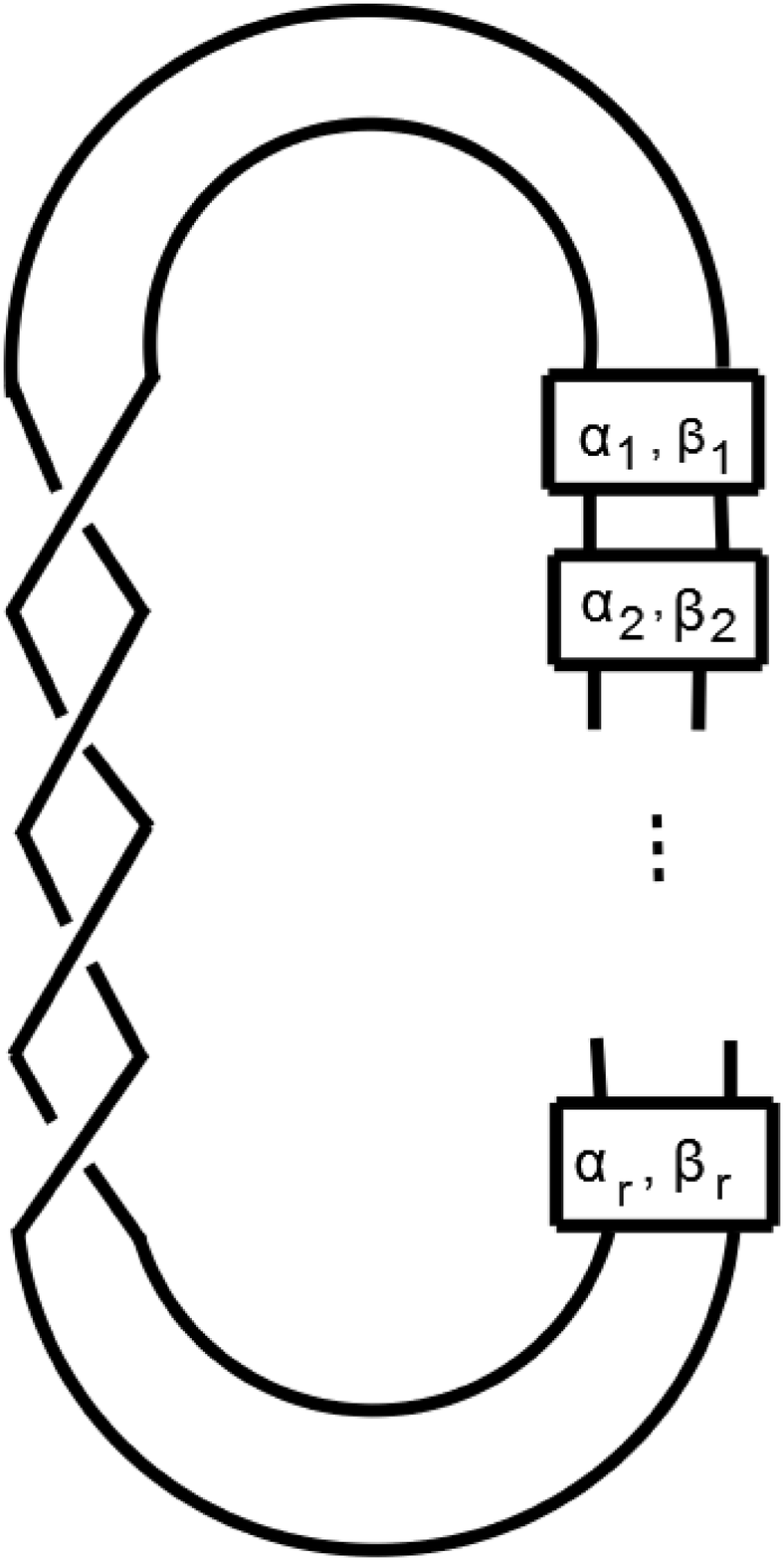} \hspace{2cm}
  \includegraphics[width=4.5cm, height=3.5cm]{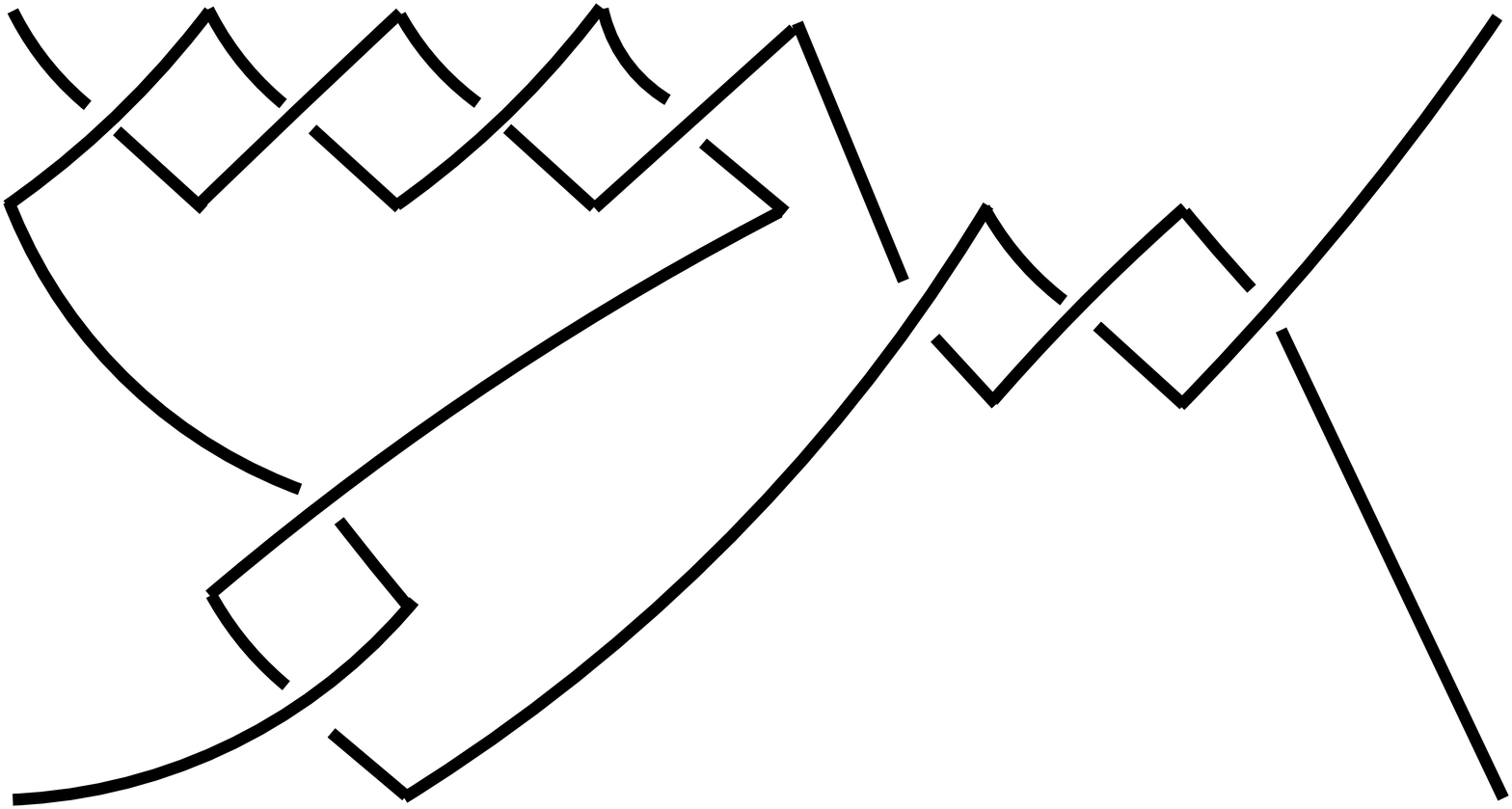}\\
(a)\hspace{6cm} (b)
\end{center}
\caption{Montesinos link diagram in (a) with $e = 4$. (b)  represents the rational tangle with slope $\frac{7}{31}$ with  continued fraction $[4,2,3]$.}\label{figure1}
\end{figure}

The Montesinos links are classified by the following theorem:
\begin{thm}\cite{B,BoSi}
The Montesions link $M(e;(\alpha_{1},\beta_{1}), (\alpha_{2},\beta_{2}), \dots, (\alpha_{r},\beta_{r}))$ with $r \geq 3$ and $\sum_{j = 1}^{r} \frac{1}{\alpha_{j}} \leq r -2$, is classified by the ordered set of fractions $(\frac{\beta_{1}}{\alpha_{1}}\pmod 1,\frac{\beta_{2}}{\alpha_{2}}\pmod 1, \ldots,\\ \frac{\beta_{r}}{\alpha_{r}}\pmod 1)$ up to cyclic permutations and reversal of order, together with the rational number $e_{0} = e - \sum_{j = 1}^{r} \frac{\beta_{j}}{\alpha_{j}}$.
\end{thm}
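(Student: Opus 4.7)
The plan is to reduce the classification to the classification of Seifert fibered $3$-manifolds via the double branched cover construction, which is the route taken by Bonahon and Siebenmann. The first step is to show that for a Montesinos link $L=M(e;(\alpha_1,\beta_1),\ldots,(\alpha_r,\beta_r))$, the double cover $\Sigma_2(L)$ of $S^3$ branched along $L$ is a Seifert fibered space over $S^2$ with exactly $r$ exceptional fibers, the $j$th one having unnormalized Seifert invariants $(\alpha_j,\beta_j)$, and with overall Euler number determined by $e$. This is a local computation: the double cover of a ball containing a rational tangle of slope $\beta_j/\alpha_j$ is a solid torus equipped with a natural $S^1$-fibration of multiplicity $\alpha_j$, while the $e$ integral half-twists contribute only to the Euler class once the pieces are glued along their boundary tori.

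The second step is to extract invariants from the Seifert data. By the classification of Seifert fibered spaces over $S^2$, two such manifolds are orientation-preservingly homeomorphic if and only if their normalized Seifert invariants agree up to permutation of the exceptional fibers, together with the rational Euler number. Normalizing each $\beta_j$ modulo $\alpha_j$ absorbs a summand into the Euler class, and what remains invariant is exactly the unordered tuple of fractions $\beta_j/\alpha_j\pmod 1$ together with the rational number $e_0=e-\sum_{j=1}^r \beta_j/\alpha_j$. This explains why the statement records $e_0$ rather than $e$, and why the fractions are only determined modulo $1$.

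The third step is to lift the $3$-manifold classification back to the link classification. The involution on $\Sigma_2(L)$ whose quotient is $(S^3,L)$ is determined up to conjugation by the Seifert structure provided the Seifert fibration itself is unique; this is precisely where the hypothesis $\sum 1/\alpha_j\leq r-2$, equivalent to non-positive orbifold Euler characteristic of the base $S^2(\alpha_1,\ldots,\alpha_r)$, is invoked. Under that hypothesis the base orbifold is hyperbolic or Euclidean, the Seifert fibration is unique up to isotopy, and the only residual symmetries of the data that preserve the link type are the cyclic permutations of the slots and the overall reversal of order, coming respectively from the rotational and flip symmetries of the planar diagram.

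The main obstacle, in my view, is precisely this last step: verifying that no further ambiguity is introduced when passing from the Seifert data back to the link, i.e.\ that every orientation-preserving homeomorphism $\Sigma_2(L)\to\Sigma_2(L')$ intertwining the two covering involutions can be arranged to respect the fibrations and hence descends to a tangle-level equivalence of Montesinos diagrams. Ruling out exotic isomorphisms of the Seifert structure is what forces the bounds $r\geq 3$ and $\sum 1/\alpha_j\leq r-2$; the small cases where these fail give rise to lens spaces and the exceptional small Seifert fibered spaces, where a Montesinos presentation is genuinely non-unique in additional ways. The existence direction, i.e.\ that the listed data actually determine the link, is by contrast a direct tangle calculation using the invariance of $e_0$ under the simultaneous substitutions $\beta_j\mapsto \beta_j+\alpha_j$ and $e\mapsto e+1$, and under cyclic/reversal rearrangement of the tangle slots.
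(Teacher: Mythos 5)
The paper does not prove this theorem; it is quoted verbatim from the cited references of Bonahon and Boileau--Siebenmann, so there is no in-paper argument to compare against. Your sketch reconstructs precisely the route those references take: pass to the double branched cover, identify it as a Seifert fibration over $S^2$ with $r$ exceptional fibers of unnormalized invariants $(\alpha_j,\beta_j)$ and rational Euler number $e_0=e-\sum_j\beta_j/\alpha_j$, invoke the classification of Seifert fibered spaces, and then descend back to the link. You also correctly decode the hypothesis $\sum_j 1/\alpha_j\le r-2$ as non-positivity of the orbifold Euler characteristic of the base $S^2(\alpha_1,\dots,\alpha_r)$, which is what guarantees uniqueness of the Seifert fibration and rules out the lens-space and small Seifert exceptions where Montesinos presentations are non-unique in further ways.

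That said, what you label as ``the main obstacle'' is in fact the entire mathematical content of the theorem, and your proposal only names it rather than proving it: one must show that every homeomorphism $\Sigma_2(L)\to\Sigma_2(L')$ carrying one covering involution to the other can be isotoped to be fiber-preserving, and then classify fiber-preserving involutions on such Seifert fibered spaces together with their quotient orbifolds. This equivariant classification (the subject of Bonahon's th\`ese and of the Boileau--Siebenmann preprint) is where the dihedral ambiguity --- cyclic permutation and reversal of the slots --- actually gets established, and it does not follow formally from uniqueness of the fibration alone: one still has to determine which symmetries of the base orbifold with its cyclically ordered cone points are realized by involutions covering the link. So as a blind reconstruction your outline is faithful and correctly apportions the hypotheses, but it is a roadmap to the cited proof rather than a self-contained one; since the paper itself treats the statement as a black box, that is an entirely reasonable level of detail here.
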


\begin{defn}
A Montesinos link $M(e;(\alpha_{1},\beta_{1}), (\alpha_{2},\beta_{2}), \dots, (\alpha_{r},\beta_{r}))$ is in standard form if $0 < \frac{\beta_{i}}{\alpha_{i}} < 1$ for $ i = 1,2, \ldots, r$.
\end{defn}
\begin{rem}
Any Montesinos link can be put into a standard form using the above theorem.
\end{rem}
\begin{defn}
A rational tangle extends the crossing $c$ if it contains the crossing $c$ and for all $i$ we have $\sign(c)a_{i} \geq 1$ (\cite[Page.\,2452]{CK}). The product of rational tangles extends the crossing $c$ if each rational tangle in the product extends the crossing $c$. For the definition of product of two rational tangles and more details of this subject see \cite[Section.\,2]{KL1}.
\end{defn}

For a given sequence of nonnegative integers $[a_{1},a_{2},\ldots,a_{n}]$, the authors of \cite{KL} define a new sequence of positive integers recursively by
\[
T(0) = 1, T(1) = a_{1},
T(m) = a_{m}T(m-1) + T(m-2).
\]

They show that $T(n)$ is equal to the determinant of the rational link $K_{\alpha,\beta}$ with slope $\frac{\beta}{\alpha}$ of two coprime integers $ \alpha > 1$ and $\beta$ whose continued fraction is $[a_{1},a_{2},\ldots,a_{n}]$. For our use, we define $\overleftarrow{T}(m)$ to be $T(m)$ after replacing $a_{i}$ with $a_{j}$ where $i + j = n + 1$. Therefore, we obtain
\[
\overleftarrow{T}(0) = 1, \overleftarrow{T}(1) = a_{n}, \overleftarrow{T}(m) = a_{n - m + 1}\overleftarrow{T}(m-1) + \overleftarrow{T}(m-2),
\] for the above sequence of nonnegative integers.

\begin{rem}
It is clear that if $[a_{1},a_{2},\ldots,a_{n}]$ is a continued fraction of $\frac{\beta}{\alpha}$, then $[-a_{1},-a_{2},\ldots,\\ -a_{n}]$ is a continued fraction of $\frac{-\beta}{\alpha}$.
\end{rem}
\begin{lemm}
For any positive rational number $\frac{\beta}{\alpha}$, there is a continued fraction $[a_{1},a_{2},\ldots,a_{n}]$ of nonnegative integers.
\end{lemm}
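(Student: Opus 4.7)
The plan is first to establish the more restrictive statement that every positive rational in $(0,1]$ admits a continued fraction with strictly positive integer entries, via the Euclidean algorithm, and then to extend this to rationals strictly greater than $1$ by applying the first step to the reciprocal and appending a zero in the outermost slot. The reason this extra case is needed at all is that, under the convention $[a_1,\dots,a_n]=\cfrac{1}{a_n+\cdots}$ fixed earlier in the paper, the value is at most $1/a_n$ whenever $a_n\ge 1$, so representing any rational $\ge 1$ forces $a_n=0$.

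For the restricted statement I will assume $\gcd(\alpha,\beta)=1$ with $\beta\le\alpha$ and induct on $\alpha$. When $\beta=\alpha$, coprimality forces $\alpha=\beta=1$ and we take $[1]$. Otherwise $\beta<\alpha$; writing $\alpha=q\beta+r$ with $0\le r<\beta$ and $q\ge 1$ gives
\[
\frac{\beta}{\alpha}=\cfrac{1}{q+\dfrac{r}{\beta}}.
\]
If $r=0$ then $\beta=1$ by coprimality and $\beta/\alpha=[\alpha]$; otherwise $r/\beta$ is a positive rational with denominator $\beta<\alpha$, so induction supplies $r/\beta=[a_1,\dots,a_{n-1}]$ with positive entries, and substituting yields $\beta/\alpha=[a_1,\dots,a_{n-1},q]$.

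For the extension to $\beta/\alpha>1$, I will apply the restricted statement to the reciprocal $\alpha/\beta\in(0,1)$ to obtain $\alpha/\beta=[a_1,\dots,a_{n-1}]$ with positive entries. Appending a zero at the outermost position then inverts the value,
\[
[a_1,\dots,a_{n-1},0]=\cfrac{1}{0+\cfrac{1}{a_{n-1}+\cdots+\dfrac{1}{a_1}}}=\frac{1}{\alpha/\beta}=\frac{\beta}{\alpha},
\]
which is the desired expansion with nonnegative integer entries.

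I do not foresee any real obstacle. The content is essentially the Euclidean algorithm; the only nonroutine step is the zero-appending trick, which becomes obvious once one notices that $a_n$ sits at the outermost position in the adopted convention, so the argument reduces to one induction plus one substitution.
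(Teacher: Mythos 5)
The paper states this lemma without any proof at all (there is no \verb|proof| environment following it), so there is nothing to compare your argument against; the authors evidently regard it as standard. Your write-up correctly supplies the missing argument: the Euclidean-algorithm induction on the denominator handles $\beta/\alpha\in(0,1]$, and appending a $0$ in the outermost slot (which the paper's convention $[a_1,\dots,a_n]=1/(a_n+\cdots)$ and its recursion $T(m)=a_mT(m-1)+T(m-2)$ both tolerate, since only nonnegativity is required) correctly inverts the value to cover $\beta/\alpha>1$. One tiny quibble with your motivating remark: it is not true that every rational $\ge 1$ forces $a_n=0$, since $[1]=1$; the correct statement is that rationals strictly greater than $1$ do. This does not affect the proof itself, because you treat the value $1$ inside the restricted case.
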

\begin{lemm}
Let $[a_{1},a_{2},\ldots,a_{n}]$ be a continued fraction of $\frac{\beta}{\alpha}$ of nonnegative integers, then   $T(n - 1) = \beta$ and $T(n) = \alpha$.
\end{lemm}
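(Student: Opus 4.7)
The plan is to prove by induction on $n$ the stronger claim that $[a_1,a_2,\ldots,a_n] = T(n-1)/T(n)$ as a fraction, together with $\gcd(T(n-1),T(n)) = 1$. Since $\beta/\alpha$ is given in lowest terms (as $\alpha$ and $\beta$ are coprime by hypothesis), this pins down $T(n-1) = \beta$ and $T(n) = \alpha$.

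The key identity I would use is the outermost peeling of the continued fraction in the paper's convention:
\[
[a_1,a_2,\ldots,a_m,a_{m+1}] = \frac{1}{a_{m+1} + [a_1,a_2,\ldots,a_m]},
\]
which follows directly from the definition. The base case $n=1$ is $[a_1] = 1/a_1 = T(0)/T(1)$. For the inductive step, I would substitute the hypothesis $[a_1,\ldots,a_m] = T(m-1)/T(m)$ into the displayed identity, clear fractions, and recognize $a_{m+1}T(m) + T(m-1)$ as $T(m+1)$ via the defining recursion of $T$; this yields $[a_1,\ldots,a_{m+1}] = T(m)/T(m+1)$, as required.

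For the coprimality, I would run a parallel induction: the $T$-recursion gives
\[
\gcd(T(m),T(m+1)) = \gcd(T(m),\, a_{m+1}T(m)+T(m-1)) = \gcd(T(m),T(m-1)),
\]
so iterating down yields $\gcd(T(n-1),T(n)) = \gcd(T(0),T(1)) = \gcd(1,a_1) = 1$.

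There is no real obstacle here; the only point worth flagging is the paper's slightly unusual convention where $a_1$ sits innermost and $a_n$ outermost in the continued fraction. This convention is precisely what makes the bottom-up recursion for $T$ line up with the top-down peeling of the continued fraction in a single induction, so that no detour through the classical numerator/denominator convergents is needed.
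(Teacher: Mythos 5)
Your proposal is correct and is essentially the paper's own argument: the paper likewise establishes $\gcd(T(n),T(n-1))=1$ by descending through the recursion to $T(0)=1$, and then unrolls $\frac{T(n)}{T(n-1)} = a_n + \frac{1}{T(n-1)/T(n-2)} = \cdots$ into the continued fraction for $\frac{\alpha}{\beta}$, which is exactly your inductive step written as an iterated chain of equalities rather than a formal induction. The only difference is cosmetic (you work with $\beta/\alpha = T(n-1)/T(n)$ instead of its reciprocal).
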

\begin{proof}
 We first show  that $T(n)$ and $T(n-1)$ are relatively prime. Using the recursive formula for $T(n)$, we can see that if  $d|T(n)$ and $d|T(n-1)$, then $d|T(n-2)$. Now if repeat this process we obtain that $d|T(0)=1$.\\
Now, we have
\begin{align*}
\frac{T(n)}{T(n-1)} & = \frac{a_{n}T(n-1) + T(n-2)}{T(n-1)} \\
& = a_{n} + \frac{T(n-2)}{T(n-1)} \\
& = a_{n} + \frac{1}{\frac{T(n-1)}{T(n-2)}}\\
& = a_{n} + \frac{1}{a_{n-1} + {\frac{T(n-3)}{T(n-2)}}}\\
&  \ \ \vdots  \\
& = a_{n} + \cfrac{1}{a_{n-1}+\cfrac{1}{\ddots +
\cfrac{1}{a_{1} }}} = \frac{\alpha}{\beta}.
\end{align*}

\end{proof}
\begin{lemm}
For any sequence of nonnegative integers $[a_{1},a_{2}, \ldots, a_{n}]$ with $a_{m} > 1$, we have
\[
T(m-1) + T_{1}(m)  = T(m),
\]
\[
\overleftarrow{T}(m-1) + \overleftarrow{T}_{1}(m)  = \overleftarrow{T}(m),
\]
where $T_{1}(m)$, and $ \overleftarrow{T}_{1}(m)$ are $T(m)$, and $ \overleftarrow{T}(m)$ for the sequence $[a_{1},a_{2}, \ldots, a_{m}-1,\ldots ,a_{n}]$ respectively.
\end{lemm}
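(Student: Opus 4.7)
The plan is to derive both identities directly from the defining recurrences, using the fact that modifying only $a_m$ changes the computation at the single step where $a_m$ enters as a coefficient.

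For the first identity, I would observe that $T(k)$ for $k \leq m-1$ depends only on $a_1, \ldots, a_{m-1}$, so $T(m-1)$ and $T(m-2)$ are identical for the original and the modified sequences. Applying the recurrence $T(m) = a_m T(m-1) + T(m-2)$ to the modified sequence then gives
\[
T_1(m) = (a_m - 1) T(m-1) + T(m-2) = T(m) - T(m-1),
\]
which rearranges to the claimed identity.

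For the second identity, I would apply the same argument after reversing the sequence, since by its very definition $\overleftarrow{T}$ is $T$ computed on $[a_n, a_{n-1}, \ldots, a_1]$ and the recurrence $\overleftarrow{T}(k) = a_{n-k+1}\overleftarrow{T}(k-1) + \overleftarrow{T}(k-2)$ has the same shape as the $T$-recurrence. Only the one step where the relevant coefficient appears is affected by the modification, and the same subtraction as above yields $\overleftarrow{T}_1(m) = \overleftarrow{T}(m) - \overleftarrow{T}(m-1)$.

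The hypothesis $a_m > 1$ is used only to guarantee that $a_m - 1$ remains a positive integer so that the modified sequence is still a legitimate input; it plays no role in the algebraic identity itself. I expect no real obstacle here: both equations are essentially one-line consequences of the recurrence, and the substantive content of the lemma is the observation that decrementing $a_m$ by one cleanly splits $T(m)$ as $T(m-1) + T_1(m)$, which is precisely the form that will allow us to recognize a determinant sum $\det(L) = \det(L_0) + \det(L_1)$ later on.
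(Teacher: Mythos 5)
Your proof is essentially the paper's own: both identities are obtained by substituting the decremented coefficient into the single recurrence step where it enters, the earlier terms being unchanged, and you correctly identify that $a_m>1$ serves only to keep the modified sequence admissible. One caveat, which the paper's statement and proof share with your write-up: in the $\overleftarrow{T}$-recurrence the coefficient entering at step $m$ is $a_{n-m+1}$, not $a_m$, so if one literally decrements $a_m$ (as the lemma's wording says) then ``the one step where the relevant coefficient appears'' is step $n-m+1$, and the one-line subtraction yields $\overleftarrow{T}_1(n-m+1)=\overleftarrow{T}(n-m+1)-\overleftarrow{T}(n-m)$ rather than the displayed identity at index $m$; indeed the second identity as literally stated fails already for $[1,2,3,4]$ with $m=2$, where $\overleftarrow{T}(1)+\overleftarrow{T}_1(2)=4+13\neq 13=\overleftarrow{T}(2)$. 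What your argument (and the paper's formula $\overleftarrow{T}_1(m)=(a_{n-m+1}-1)\overleftarrow{T}(m-1)+\overleftarrow{T}(m-2)$) actually establishes is the version in which the decremented entry is $a_{n-m+1}$, and that is the form one should record, since it is the only one the single-step recurrence argument supports.
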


\begin{proof}
The result follows since
\[
T_{1}(m) = (a_{m}-1)T(m-1) + T(m-2),
\]
and
\[
\overleftarrow{T}_{1}(m) = (a_{n-m+1} -1)\overleftarrow{T}(m-1) + \overleftarrow{T}(m-2).
\]
\end{proof}

\section{Main Results}
We first prove  the following lemma that will be used in proof of the main result of this paper.
\begin{lemm}\label{new}
Let $L$ be a quasi-alternating link diagram at the crossing $c$, then
\[ \det(L') =
  \begin{cases}
  T(n)\det(L_{0}) + \overleftarrow{T}(n-1) \det(L_{1}),
     & \text{if $\sign(c) = -1 $},
     \\
    {T(n)\det(L_{1}) + \overleftarrow{T}(n - 1) \det(L_{0})}, & \text{if $\sign(c) = 1 $},
  \end{cases}
\]
where $L'$ is the link diagram obtained from $L$ by replacing the crossing $c$ by a rational tangle that extends $c$ with slope $\frac{T(n-1)}{T(n)}$.
\end{lemm}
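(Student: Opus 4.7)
The plan is to establish the formula by induction on $n$, the length of the continued fraction $[a_1, \ldots, a_n]$. I would treat only the case $\sign(c) = -1$; the case $\sign(c) = +1$ follows by mirror image, which swaps $L_0$ and $L_1$ together with the two coefficients $T(n)$ and $\overleftarrow{T}(n-1)$.

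For the base case $n = 1$, the tangle $[a_1]$ is a single twist region of $a_1$ crossings of the same sign as $c$ (one of which is $c$ itself), and I would prove $\det(L') = a_1 \det(L_0) + \det(L_1)$ by a secondary induction on $a_1$. The initial case $a_1 = 1$ gives $L' = L$ and is the defining quasi-alternating relation at $c$. For the step from $a_1$ to $a_1+1$, adding one same-sign crossing to the twist region preserves quasi-alternatingness at that new crossing by the Champanerkar-Kofman twisting argument; its two smoothings are $L'_{a_1}$ and, after removing the remaining nugatory crossings via Reidemeister II moves, a diagram isotopic to $L_0$. Iterating the resulting recursion $\det(L'_{a_1+1}) = \det(L'_{a_1}) + \det(L_0)$ gives $a_1 \det(L_0) + \det(L_1) = T(1) \det(L_0) + \overleftarrow{T}(0) \det(L_1)$, as required.

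For the inductive step on $n$, let $L^{(n)}$ denote the link with tangle $[a_1, \ldots, a_n]$ at $c$, and assume the formula for all continued fractions of length less than $n$. I would use a secondary induction on $a_n$, peeling one crossing at a time from the outermost twist of the tangle. By quasi-alternating inheritance under rational tangle extension, $L^{(n)}$ is quasi-alternating at a crossing in its outermost twist, with smoothings being the link associated to $[a_1, \ldots, a_n - 1]$ (the outer twist reduced by one) and the link $L^{(n-1)}$ associated to $[a_1, \ldots, a_{n-1}]$ (obtained when the remaining outer crossings collapse as nugatory after Reidemeister II moves). The quasi-alternating relation, combined with the secondary and primary inductive hypotheses applied to these two simpler links respectively, then yields
\[
\det(L^{(n)}) = \bigl[T_1(n) + T(n-1)\bigr]\det(L_0) + \bigl[\overleftarrow{T}^{[a_1,\ldots,a_n-1]}(n-1) + \overleftarrow{T}^{[a_1,\ldots,a_{n-1}]}(n-2)\bigr]\det(L_1).
\]
By Lemma 2.7, the first bracket equals $T(n)$; the analogous identity $\overleftarrow{T}(n-1) = \overleftarrow{T}^{[a_1,\ldots,a_n-1]}(n-1) + \overleftarrow{T}^{[a_1,\ldots,a_{n-1}]}(n-2)$, which follows directly from the reversed-sequence recursion defining $\overleftarrow{T}$, identifies the second bracket with $\overleftarrow{T}(n-1)$, closing the induction.

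The main obstacle will be the diagrammatic identification of the two smoothings of an outermost-twist crossing with the links associated to the continued fractions $[a_1, \ldots, a_n - 1]$ and $[a_1, \ldots, a_{n-1}]$. This requires careful attention to the alternating rotational orientations of the twist regions produced by the rational tangle construction, and to verifying that after the second smoothing the remaining $a_n - 1$ outer-twist crossings can indeed be removed as nugatory. Once these identifications are secured, the arithmetic identities for $T$ and $\overleftarrow{T}$ needed to close the induction follow directly from the definitions and from Lemma 2.7.
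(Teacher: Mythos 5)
Your proposal is correct and follows essentially the same argument as the paper: a double induction on the length $n$ of the continued fraction and on $a_n$, resolving a crossing of the outermost twist so that the two smoothings carry the tangles $[a_1,\ldots,a_n-1]$ and $[a_1,\ldots,a_{n-1}]$, then closing the induction with the recursions $T(n-1)+T_1(n)=T(n)$ and the analogous identity for $\overleftarrow{T}$. The parity issue you flag as the main obstacle is exactly what the paper handles by splitting into the cases $n$ even and $n$ odd (which merely swaps the roles of $L'_0$ and $L'_1$), and the base case $a_n=1$ of your secondary induction is disposed of in the paper by rewriting $[a_1,\ldots,a_{n-1},1]$ as $[a_1,\ldots,a_{n-1}+1]$ and invoking the primary hypothesis.
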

\begin{proof}
We prove the case where $\sign(c) = -1$ and the other case will follow by taking mirror image. Let $[a_{1},a_{2},\ldots,a_{n}]$ be the continued fraction of nonnegative integers of the rational tangle with slope $\frac{T(n-1)}{T(n)}$.

We use induction on the number of denominators of the continued fraction. It is clear that the case holds for $n = 1$ by just doing simple induction on the length of $a_{1}$. We assume that the result holds for all rational tangles of number of denominators equal to $n -1$, i.e for the any rational tangle with continued fraction $[a_{1},a_{2},\ldots,a_{n-1}]$.
Now we want to show that the result holds for any rational tangle of continued fraction $[a_{1},a_{2},\ldots,a_{n}]$. Again we apply induction but this time on $a_{n}$. In the case that $a_{n} = 1$, then the continued fraction is given by the sequence $[a_{1},a_{2},\ldots,a_{n-1}+1]$ and hence the result follows by the induction hypothesis. Now suppose that the result holds for $a_{n} = k$, we want to show the result for $a_{n} = k + 1$. We have to consider two cases of $n$ being either even or odd. First, we assume that $n$ is odd. Now we smooth one of the crossings in $a_{n}$ and we obtain
\begin{align*}
\det(L') & = \det(L'_{0}) + \det(L'_{1})\\& = T(n-1)\det(L_{0}) + \overleftarrow{T}(n-2)\det(L_{1}) + T_{1}(n)\det(L_{0}) + \overleftarrow{T}_{1}(n-1)\det(L_{1}) \\ & = (T(n-1) + T_{1}(n))\det(L_{0}) + (\overleftarrow{T}(n-2) + \overleftarrow{T}_{1}(n - 1))\det(L_{1}) \\ & = T(n)\det(L_{0}) + \overleftarrow{T}(n-1)\det(L_{1}).
\end{align*}

In the case that $n$ is even the result follows since the first two terms in the second equality equal to $\det(L_{1}')$ and the last two terms equal to $\det(L_{0}')$.
\begin{figure}[htbp]
\begin{center}
\includegraphics[width=5.5cm, height=4cm]{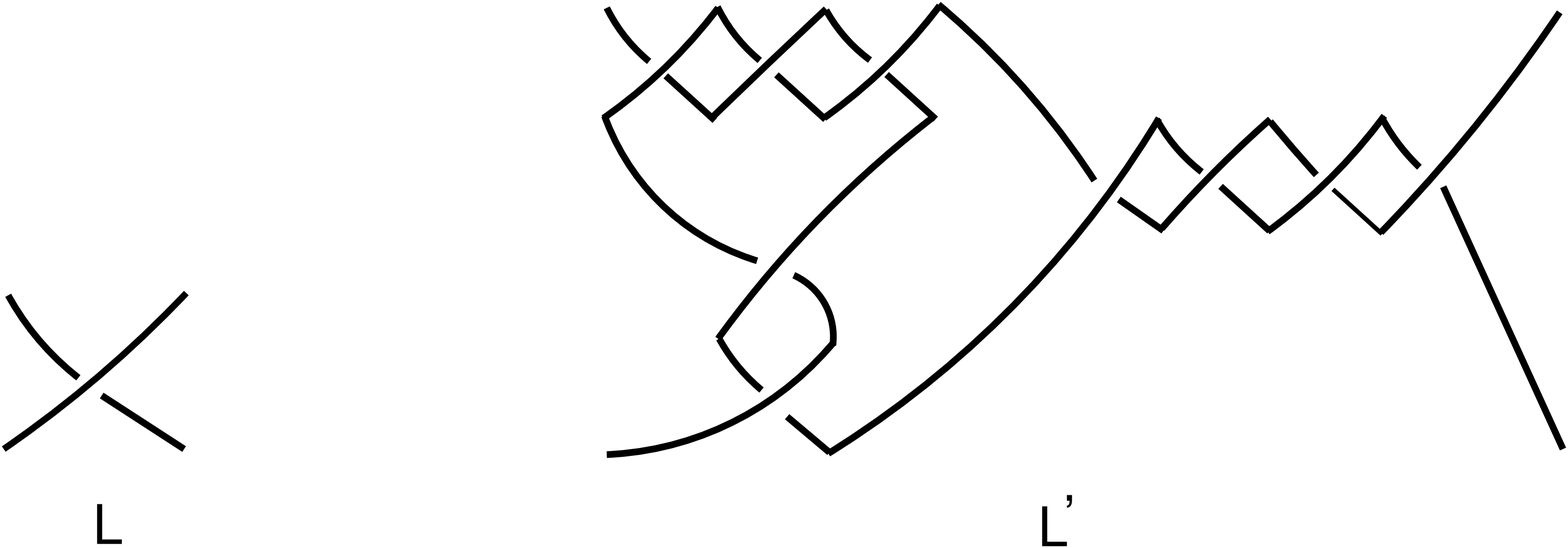} \hspace{1cm} 
\includegraphics[width=4.5cm, height=4cm]{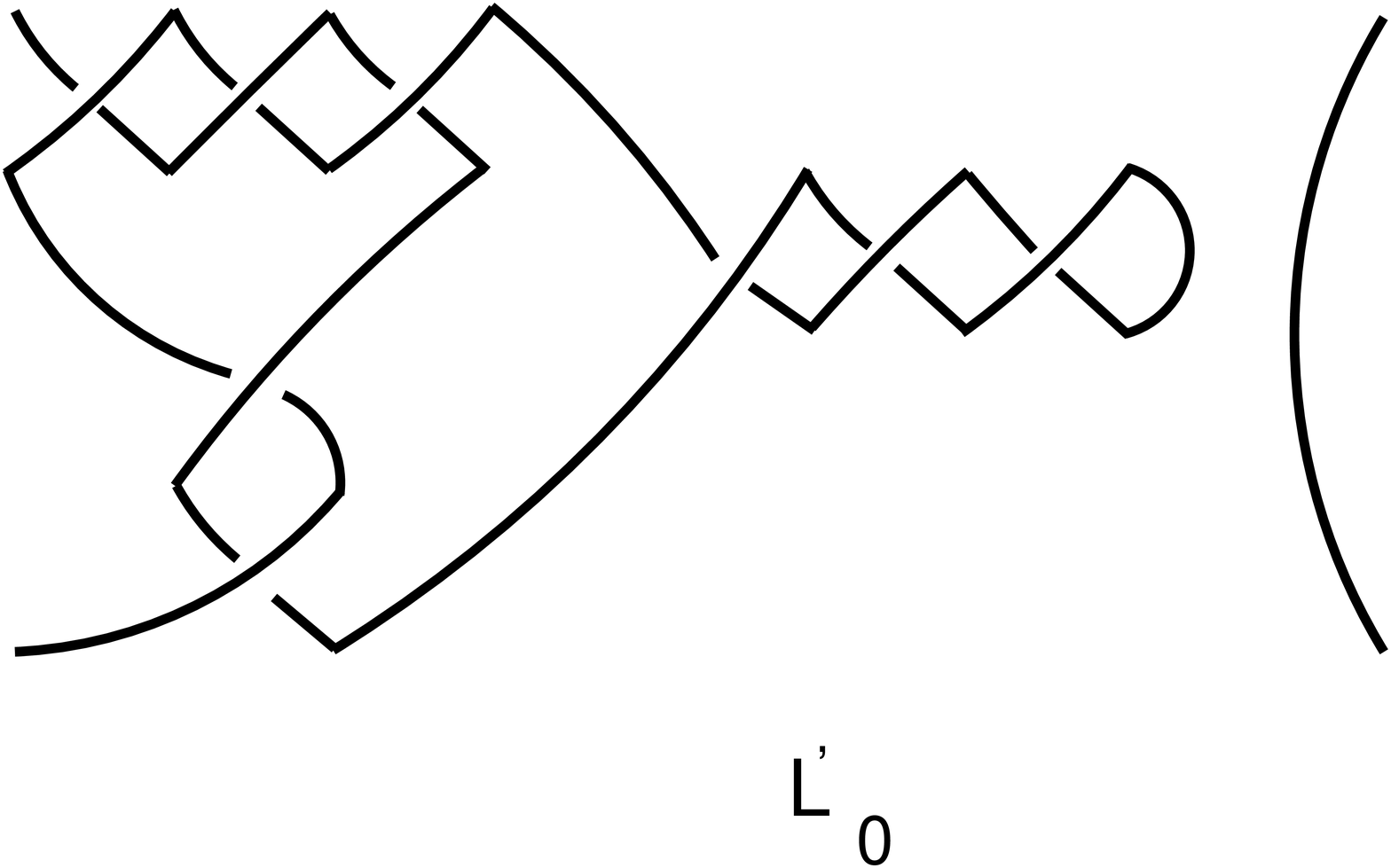} \hspace{1cm} 
\includegraphics[width=4.5cm, height=4cm]{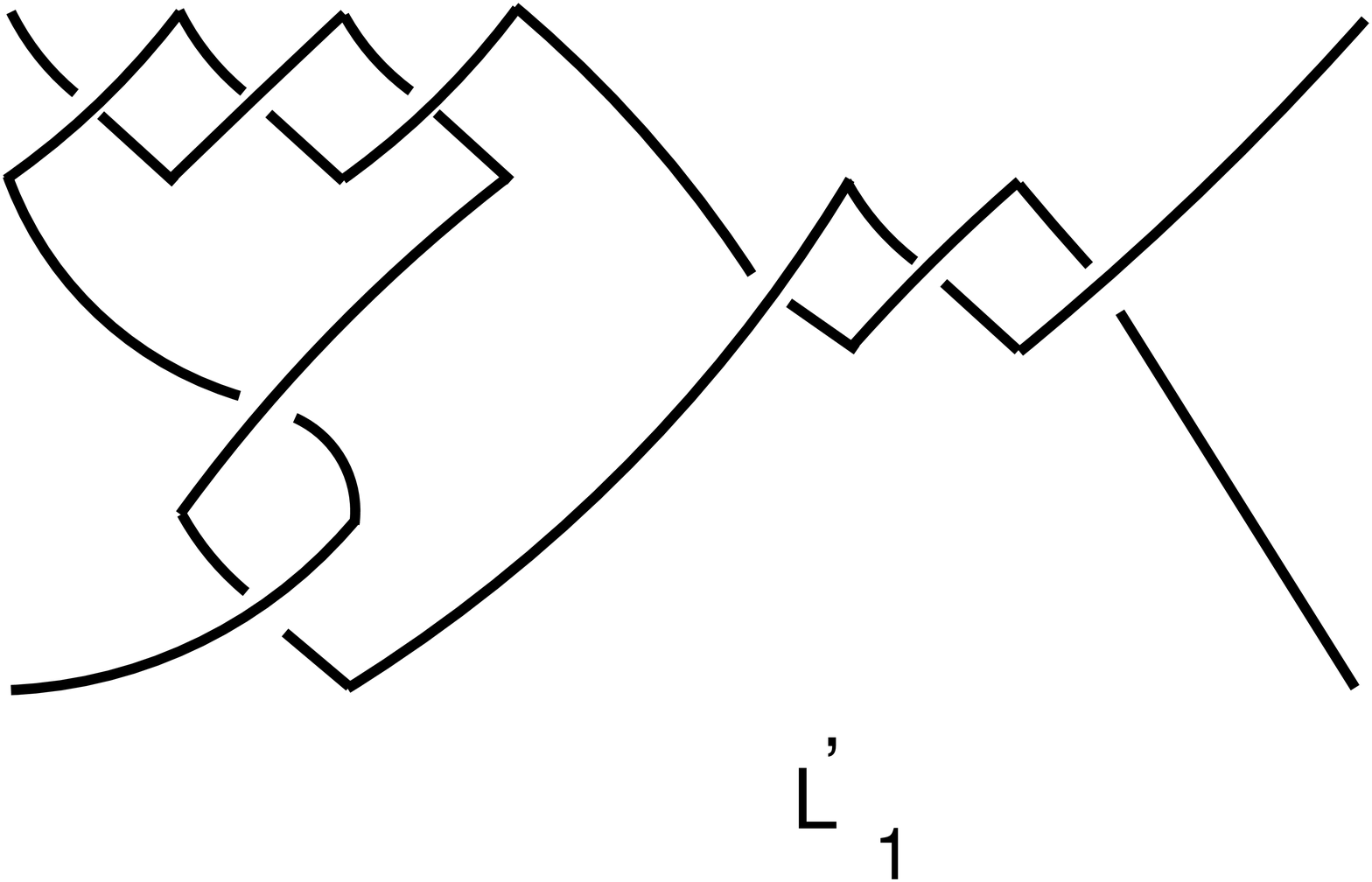}
\end{center}
\end{figure}
\end{proof}

Now we state our main theorem that is a natural generalization of \cite[Theorem.\,2.1]{CK}.

\begin{thm}\label{new2}
If $L$ is a quasi-alternating link diagram at some crossing $c$, then $L^{*}$ obtained by replacing the crossing $c$ by a product of rational tangles that extends the crossing $c$ is quasi-alternating.
\end{thm}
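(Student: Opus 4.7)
The plan is a two-layered induction that reduces the statement to the determinant bookkeeping already carried out in Lemma \ref{new}. First I would reduce the product case to the single-tangle case: a product of rational tangles each extending $c$ can be built up one tangle at a time, starting from $L$ by replacing $c$ with the first rational tangle to obtain some $L_1$, then replacing a crossing of $L_1$ (chosen within the just-inserted tangle, whose sign matches $c$) by the second rational tangle, and so on. An induction on the number of tangles in the product then reduces the theorem to the following claim: if $L$ is quasi-alternating at $c$ and $L'$ is obtained by replacing $c$ with a single rational tangle of continued fraction $[a_1,\ldots,a_n]$ that extends $c$, then $L'$ is quasi-alternating.

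For that single-tangle claim I would use a nested induction, outer on $n$ and inner on $a_n$, mirroring the structure of the proof of Lemma \ref{new}. The base case $n=1$ is essentially the Champanerkar--Kofman twisting theorem and is established by induction on $a_1$. For the inductive step (take $\sign(c)=-1$; the positive case follows by mirror image) I would exhibit an explicit crossing $c'$ in $L'$ at which $L'$ is quasi-alternating, namely a crossing in the block $a_n$ of the rational tangle. The two smoothings of $c'$ should correspond to replacing $c$ in $L$ by the rational tangles with continued fractions $[a_1,\ldots,a_{n-1}]$ and $[a_1,\ldots,a_n-1]$, reducing to the shorter sequence $[a_1,\ldots,a_{n-1}+1]$ when $a_n=1$. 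Both of these are rational tangles extending $c$ of strictly smaller complexity, so both are quasi-alternating by the inductive hypothesis, in particular with determinants at least one.

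The remaining condition $\det(L')=\det(L'_0)+\det(L'_1)$ will follow from Lemma \ref{new} applied three times, combined with the recursive identities $T(n-1)+T_1(n)=T(n)$ and $\overleftarrow{T}(n-2)+\overleftarrow{T}_1(n-1)=\overleftarrow{T}(n-1)$ established in the preceding lemma, together with the hypothesis $\det(L)=\det(L_0)+\det(L_1)$. The computation is essentially the same one appearing in the proof of Lemma \ref{new}, read in the opposite direction. The parity of $n$ swaps the roles of $L'_0$ and $L'_1$ (as already noted in that proof) but does not otherwise affect the argument.

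The main obstacle I expect is the geometric identification in the inductive step: verifying that the two smoothings of the chosen crossing $c'$ really produce the diagrams obtained by replacing $c$ with the reduced continued fraction tangles, rather than some more complicated configuration, and that each of those really does extend $c$ in the sense of the earlier definition. Once this identification is checked carefully, including the parity-driven swap between the two smoothings and the edge case $a_n=1$, the three numerical conditions for $L'$ to be quasi-alternating at $c'$ drop out of Lemma \ref{new} and the recursive formulas for $T(m)$ and $\overleftarrow{T}(m)$.
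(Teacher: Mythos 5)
There is a genuine gap in your first reduction, and it sits exactly where the new content of the theorem lies. Replacing a crossing chosen \emph{inside} the just-inserted tangle $\tau_1$ by the second tangle $\tau_2$ does not produce the product $\tau_1 * \tau_2$: it nests $\tau_2$ where one crossing of $\tau_1$ used to be, yielding a different diagram (already the crossing counts disagree, since your nested diagram has $|\tau_1|-1+|\tau_2|$ crossings in place of $c$ while the product has $|\tau_1|+|\tau_2|$). Since a product of rational tangles is in general not itself rational --- this is precisely why Montesinos links form a larger class than rational links --- the product case cannot be reached by iterated replacements at crossings lying inside previously inserted tangles. The crossing at which quasi-alternatingness must be established in order to attach the next factor of the product lies \emph{outside} $\tau_1$, in product position alongside it, and your proposal never produces or analyzes such a crossing. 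This is where the paper does its real work: it first passes to $L^{2}$, the diagram with two crossings $c_1, c_2$ in product position replacing $c$, replaces $c_1$ by the rational tangle to obtain $L^{2'}$ (which is $L$ with $c$ replaced by the tangle stacked with the single crossing $c_2$), and then proves that $L^{2'}$ is quasi-alternating \emph{at $c_2$}: the smoothings $L^{2'}_0$ and $L^{2'}_1$ are identified as $L'$ and a connected sum of a rational link with $L_1$ (handled by \cite[Lemma 2.3]{CK}), and the determinant identity at $c_2$ is verified using Lemma \ref{new}. Only then can $c_2$ be fed back into \cite[Theorem 2.1]{CK} to install the next rational tangle of the product, and the process iterated.

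The remainder of your proposal --- the nested induction on $n$ and on $a_n$ for a single rational tangle, with the smoothings identified as $[a_1,\ldots,a_{n-1}]$ and $[a_1,\ldots,a_n-1]$ --- is essentially a re-proof of \cite[Theorem 2.1]{CK}, which the paper simply cites as known; it is correct in outline but does not address the passage from one rational tangle to a product of them. To repair the argument you would need to replace the step ``choose a crossing within the just-inserted tangle'' by the introduction of an auxiliary crossing in product position and a proof that the enlarged diagram is quasi-alternating at that auxiliary crossing; that proof genuinely requires the determinant formula of Lemma \ref{new} applied to the whole inserted tangle, not merely the recursions for $T(m)$ and $\overleftarrow{T}(m)$.
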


\begin{proof}
Let $L$ be a quasi-alternating link diagram at the crossing $c$. We may assume that $\sign(c) > 0$ as shown in figure \ref{figure} by taking the mirror image if it is needed. We follow the same notation as in the proof of \cite[Theorem.\,2.1]{CK}. To be more precise, we let $L^{n}$ denote the link diagram with $|n|$ additional crossings inserted at $c$, which are vertical half-twists if $n > 0$ and horizontal half-twists if $n < 0$.

To show our result, we know that $L^{2}$ is quasi-alternating as a result of \cite[Theorem.\,2.1]{CK}. Moreover, $L^{2}$ is quasi-alternating at any one of these two crossings $c_{1}, c_{2}$ that replaces $c$. Now we replace the crossing $c_{1}$ in $L^{2}$ by any rational tangle that extends $c_{1}$ to obtain a new quasi-alternating link diagram $L^{2'}$ at any crossing of this rational tangle as a result of \cite[Theorem.\,2.1]{CK}.

We show that $L^{2'}$ is quasi-alternating at the crossing $c_{2}$. Now if we smooth $L^{2'}$ at the crossing $c_{2}$, we obtain $L^{2'}_{0}$ that is the link diagram of $L'$ where $L'$ is the link diagram obtained from $L$ by replacing $c$ by the same rational tangle that has been used before to extend the crossing $c_{1}$ which quasi-alternating as a result of \cite[Theorem.\,2.1]{CK}. Also, we obtain the link diagram $L^{2'}_{1}$ that is a connected sum of a rational link and $L_{1}$ which is quasi-alternating as a result of \cite[Lemma.\,2.3]{CK}.

Finally, we check that the determinant is additive at the crossing $c_{2}$. We use Lemma \ref{new}, to obtain
\begin{align*}
\det(L^{2'}) = &  \ T(n)\det(L) + \overleftarrow{T}(n-1) \det(L_{1}) \\
& = T(n)(\det(L_{0}) + \det(L_{1})) + \overleftarrow{T}(n-1)\det(L_{1}) \\
& = T(n)\det(L_{0}) + \overleftarrow{T}(n-1)\det(L_{1}) + T(n)\det(L_{1})\\
& = \det(L') + \det(L^{2'}_{1}) \\
& = \det(L^{2'}_{0}) + \det(L^{2'}_{1}).
\end{align*}
\end{proof}

\begin{coro}\cite[Theorem.\,3.2(1)]{CK}
\begin{enumerate}
\item If $q > \min\{p_{1}, p_{2}, \ldots, p_{n}\}$, then the pretzel link $P(p_{1},p_{2},\ldots,p_{n},-q)$ is quasi-alternating for $n, p_{1}, p_{2}, \ldots, p_{n}, q \geq 2$.
\item If $p > \min\{q_{1}, q_{2}, \ldots, q_{m}\}$, then the pretzel link $P(p,-q_{1}, -q_{2}, \ldots, -q_{m})$ is quasi-alternating for $m, p, q_{1}, q_{2} \ldots, q_{m} \geq 2$.

\end{enumerate}

\end{coro}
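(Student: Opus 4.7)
The plan is to derive both parts from Theorem \ref{new2} by realizing each pretzel link in the statement as the result of extending a single crossing of a quasi-alternating $2$-bridge pretzel base by a product of integer tangles. Since every $2$-bridge (rational) link is alternating, and alternating links are quasi-alternating at each crossing of an alternating diagram by \cite[Lemma.\,3.2]{OS}, the non-split $2$-bridge pretzels $P(a,-b)$ with $ab>1$ provide a ready supply of such quasi-alternating base links.

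For part (1), the cyclic symmetry of pretzel links allows me to rename the tangles so that $p_{1}=\min\{p_{1},\ldots,p_{n}\}$; the hypothesis then reads $q>p_{1}$. I would take as base $L=P(p_{1}+1,-q)$, which is a non-split $2$-bridge link since $(p_{1}+1)q\geq 3\cdot 2>1$, hence alternating, and hence quasi-alternating at every crossing. Choose a positive crossing $c$ in the $(p_{1}+1)$-twist region of $L$, so that $\sign(c)=+1$, and extend $c$ by the product of integer tangles $[p_{2}]\cdot[p_{3}]\cdots[p_{n}]$. Each factor $[p_{j}]$ satisfies $\sign(c)\,p_{j}=p_{j}\geq 1$, so this is a product of rational tangles each of which extends $c$ in the sense of the definition in Section~2. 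Unpacking the product operation of \cite[Section.\,2]{KL1}, the resulting diagram is the pretzel diagram of $P(p_{1},p_{2},\ldots,p_{n},-q)$: the $p_{1}$ undisturbed crossings of $L$ form the first pretzel twist region, the $n-1$ factors of the product sit as the new pretzel twist regions $p_{2},\ldots,p_{n}$, and the $-q$ tangle is preserved. Theorem \ref{new2} then yields the conclusion.

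For part (2), the argument is the mirror image: assume $q_{1}=\min\{q_{1},\ldots,q_{m}\}$, take $L=P(p,-(q_{1}+1))$ (which is non-split $2$-bridge since $p(q_{1}+1)>1$), pick a negative crossing $c$ in the $-(q_{1}+1)$-twist region so that $\sign(c)=-1$, and extend $c$ by the product $[-q_{2}]\cdot[-q_{3}]\cdots[-q_{m}]$. Each factor satisfies $\sign(c)\cdot(-q_{j})=q_{j}\geq 1$, so the product extends $c$ and Theorem \ref{new2} certifies $P(p,-q_{1},-q_{2},\ldots,-q_{m})$ as quasi-alternating.

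The main obstacle is the diagrammatic verification that the product operation on integer tangles, when inserted at a single crossing of a pretzel twist region, literally grafts the prescribed new pretzel twist regions into the diagram in the right order; this is purely a matter of unpacking the tangle product of \cite[Section.\,2]{KL1} together with the pretzel presentation. Once that compatibility is settled, each part of the corollary collapses to a one-line application of Theorem \ref{new2} to the chosen $2$-bridge base.
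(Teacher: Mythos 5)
Your plan --- reduce to Theorem \ref{new2} by extending one crossing of a small quasi-alternating pretzel --- is the right one, but your choice of base link and crossing fails on two independent counts, and both are fatal rather than routine. First, $P(p_{1}+1,-q)$ is not quasi-alternating at a crossing $c$ of the $(p_{1}+1)$-twist region. The standard pretzel diagram with tassels of opposite signs is not an alternating diagram, so the fact that the underlying two-bridge link is alternating gives you quasi-alternation at the crossings of \emph{some other} diagram, not this one; quasi-alternation at a crossing is a property of the diagram. Concretely, the two smoothings at $c$ are $P(p_{1},-q)$ and (after Reidemeister I moves) a $(2,q)$ torus link, with determinants $q-p_{1}$ and $q$, whereas $\det P(p_{1}+1,-q)=q-p_{1}-1$; additivity fails, and when $q=p_{1}+1$ (allowed by the hypothesis $q>p_{1}$) the base link even has determinant $0$, e.g.\ $P(3,-3)$ is a two-component unlink. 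Second, the ``diagrammatic verification'' you defer does not go through: since $c$ lies inside a vertical twist region, whatever replaces it stays stacked vertically between the $p_{1}$ surviving crossings and is not summed side by side with them. The result is the numerator closure of $\bigl(\frac{1}{p_{1}}*([p_{2}]\cdots[p_{n}])\bigr)+\frac{-1}{q}$, not of $\frac{1}{p_{1}}+\frac{1}{p_{2}}+\cdots+\frac{1}{p_{n}}+\frac{-1}{q}$, so you do not obtain $P(p_{1},p_{2},\ldots,p_{n},-q)$ at all.

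The paper sidesteps both problems by taking $L=P(p_{1},1,-q)$ and distinguishing the unique crossing of the middle tassel. Because that crossing is a tassel of its own, replacing it by a product of vertical integer tangles genuinely grafts $n-1$ new parallel tassels $p_{2},\ldots,p_{n}$ into the pretzel. Quasi-alternation at that crossing is then checked directly: the smoothings are $T(2,p_{1})\# T(2,-q)$ and $T(2,p_{1}-q)$, with determinants $p_{1}q$ and $q-p_{1}$, and $\det(L)=p_{1}q+q-p_{1}$ is exactly their sum --- and it is precisely here that the hypothesis $q>p_{1}=\min\{p_{1},\ldots,p_{n}\}$ is used, a point your argument never engages. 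To repair your proof you would need to relocate the distinguished crossing into a singleton tassel, which is exactly the paper's construction.
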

\begin{proof}
We show the first case and the second case follows by taking the mirror image.
We may assume that $\min \{p_{1},p_{2}, \ldots, p_{n} \} =  p_{1}$. We start with the pretzel link diagram $P(p_{1},1,-q)$ to be our link $L$. It is clear that if we smooth $L$ at the crossing of the middle tassel then we obtain that $L_{0} = T(2,p_{1})\#T(2,-q)$ which is quasi-alternating by \cite[lemma.\,2.3]{CK} and $L_{1} = T(2, p_{1} -q)$ which is also quasi-alternating since it is alternating. Now we have
\[
\det(L) = p_{1}q + q - p_{1} = \det(L_{0}) + \det(L_{1}).
\]

Finally, we apply the theorem \ref{new2} to obtain the required result.
\end{proof}
\begin{thm}\label{main}
The Montesinos link $ L = M(e;(\alpha_{1},\beta_{1}),(\alpha_{2},\beta_{2}), \ldots,(\alpha_{r},\beta_{r}) )$ in standard form is quasi-alternating if one of the following conditions is satisfied
\begin{enumerate}
\item $ e \leq 0$;
\item $e \geq r + 1$;
\item $e = 1$  with $\frac{\alpha_{i}}{\alpha_{i} - \beta_{i}} > \min \{ \frac{\alpha_{j}}{\beta_{j}}\ | \  j \neq i \}$ for some $1 \leq i \leq r$;
\item $e = r$  with $\frac{\alpha_{i}}{\beta_{i}} > \min \{ \frac{\alpha_{j}}{\alpha_{j} - \beta_{j}}\ | \  j \neq i \}$ for some $1 \leq i \leq r$.
\end{enumerate}
\end{thm}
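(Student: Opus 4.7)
The plan is to deduce each of the four cases from Theorem \ref{new2} by exhibiting $L$ as the result of extending a crossing of a quasi-alternating Montesinos seed link.

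Cases (1) and (2) are handled by showing that $L$ has an alternating diagram and invoking \cite[Lemma 3.2]{OS}. When $e\le 0$, the standard Montesinos diagram of $L$ is itself alternating: each rational tangle in standard form has a positive continued-fraction expansion $[a_1^i,\ldots,a_{n_i}^i]$ (by the lemma on positive continued fractions in Section~2), giving a tangle whose horizontal and vertical twists alternate in sign; together with $|e|\ge 0$ horizontal half-twists at the top of matching sign, the entire diagram alternates. When $e\ge r+1$, applying the mirror-image transformation $(e,\beta_i)\mapsto(r-e,\alpha_i-\beta_i)$ (which puts the mirror of $L$ into standard form, as one checks via the invariant $e_0$) yields a standard-form Montesinos link with $r-e\le -1$ that falls under case (1); since $\mathcal{Q}$ is closed under mirror image, $L$ is quasi-alternating.

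For case (3), after relabelling we may assume $i=1$ and that the minimum is realised at $j=2$, so the hypothesis becomes $\frac{\alpha_1}{\alpha_1-\beta_1}>\frac{\alpha_2}{\beta_2}$, equivalently $\frac{\beta_1}{\alpha_1}+\frac{\beta_2}{\alpha_2}>1$. I would construct $L$ inductively from a quasi-alternating seed link $L_0$ by iterated applications of Theorem \ref{new2}. The seed $L_0$ is a Montesinos link of the same shape as $L$ but with the first two tangles replaced by the simplest rational tangles $(\alpha_1',\beta_1')$, $(\alpha_2',\beta_2')$ for which the strict inequality $\frac{\beta_1'}{\alpha_1'}+\frac{\beta_2'}{\alpha_2'}>1$ still holds; such a choice exists by the strictness of the hypothesis. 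The quasi-alternating property of $L_0$ is verified directly from the recursive definition applied at the top vertical crossing of $L_0$: one smoothing is an alternating Montesinos link covered by case (1), the other is a connected sum of two rational or Montesinos links that is quasi-alternating by \cite[Lemma 2.3]{CK}, and the determinant additivity at the top crossing follows from Lemma \ref{new} with the inequality of case (3) ensuring that both smoothings have positive determinant summing to $\det(L_0)$. Starting from $L_0$, Theorem \ref{new2} is then applied iteratively to reinstate the remaining continued-fraction entries of $(\alpha_1,\beta_1)$ and $(\alpha_2,\beta_2)$ as rational tangles extending positive crossings in the simplified tangles of $L_0$, eventually recovering $L$. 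Case (4) is handled by a strictly analogous argument, the rewritten hypothesis being $\frac{\beta_1}{\alpha_1}+\frac{\beta_2}{\alpha_2}<1$ and the seed being constructed by the mirror-type simplification.

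The principal obstacle is the explicit construction of the seed $L_0$ in cases (3) and (4) and the direct verification of its quasi-alternating property. This rests on a careful continued-fraction calculation using Lemma \ref{new} to match determinant additivity exactly to the hypothesised inequality, together with identification of the two smoothings of $L_0$ at the top crossing as Montesinos links already handled by case (1) or as connected sums treated in \cite[Lemma 2.3]{CK}; the bookkeeping here is what forces the precise form of the inequality in the statement.
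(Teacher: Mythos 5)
Your overall strategy --- exhibit a small quasi-alternating seed and then apply Theorem \ref{new2} --- is the same as the paper's, and your cases (1) and (2) (alternating diagram, then mirror image) match the paper's proof. But the seed you propose for case (3) (and hence for the analogous case (4)) does not work, for two reasons. First, determinant additivity fails at the crossing you select. If $L_0=M(1;(\alpha_1',\beta_1'),(\alpha_2',\beta_2'),(\alpha_3,\beta_3),\ldots)$ is in standard form and $c$ is the single half-twist coming from $e=1$, then the two smoothings at $c$ are $M(0;\ldots)$ and the connected sum $\#_j K_{\alpha_j,\beta_j}$, whose determinants are $\bigl(\prod_j\alpha_j\bigr)\sum_j\beta_j/\alpha_j$ and $\prod_j\alpha_j$, while $\det(L_0)=\bigl(\prod_j\alpha_j\bigr)\bigl|1-\sum_j\beta_j/\alpha_j\bigr|$. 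Since $s=\sum_j\beta_j/\alpha_j>0$, one can never have $|1-s|=1+s$, so the determinant is never additive at that crossing --- no choice of ``simplest'' first two tangles and no inequality rescues this. Second, even granting a quasi-alternating seed, Theorem \ref{new2} only permits extending the \emph{particular} crossing at which quasi-alternation has been certified; it does not license iteratively ``reinstating the continued-fraction entries'' of $(\alpha_1,\beta_1)$ and $(\alpha_2,\beta_2)$ at other crossings inside those tangles, so your induction step is unjustified as stated.

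The paper simplifies in the opposite direction: it keeps the two distinguished tangles in full and collapses all the others to a single crossing, after rewriting one of the two with negative slope so as to absorb $e=1$. Concretely the seed is $L'=M(0;(\alpha_1,\beta_1),(1,1),(\alpha_i,\beta_i-\alpha_i))$ and $c$ is the unique crossing of the middle $(1,1)$ tangle. Then $L'_0$ is a $4$-plate, hence a rational link; $L'_1$ is a connected sum of two rational links with $\det(L'_1)=\alpha_1\alpha_i$; and the determinant formulas give additivity exactly when $\alpha_i\beta_1-\alpha_1(\alpha_i-\beta_i)>0$, which is precisely the inequality $\frac{\beta_1}{\alpha_1}+\frac{\beta_i}{\alpha_i}>1$ you correctly extracted from the hypothesis. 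A single application of Theorem \ref{new2} at $c$, inserting the product of the remaining $r-2$ positive rational tangles, then yields $L$; case (4) follows by mirror image from case (3). So the inequality must be realized as the positivity of $\det(L'_0)$ for this three-tangle seed, not as additivity at the top half-twist of the full standard diagram.
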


\begin{proof}
We prove each case separate
\begin{enumerate}
\item The Montesinos link with the given condition has an equivalent description $M(e;(\alpha_{1},\beta_{1}),\\ (\alpha_{2},\beta_{2}), \ldots,(\alpha_{r},\beta_{r}))$ in standard form with $ e \leq 0$. The associated diagram is connected and alternating, so the link is quasi-alternating.

\item It is enough to show that its mirror image is quasi-alternating. The mirror image of the given link has a description $M(-e;(\alpha_{1},-\beta_{1}), (\alpha_{2},-\beta_{2}), \ldots,(\alpha_{r},-\beta_{r}))$. Now this link has an equivalent description in standard form $M(-e + r;(\alpha_{1},\alpha_{1} - \beta_{1}), (\alpha_{2},\alpha_{2} - \beta_{2}), \ldots, \\ (\alpha_{r},\alpha_{r} - \beta_{r}))$. Hence the result follows from the first case since $-e + r \leq 0$.
\item In this case, we start with the Montesinos link diagram $L'$ with a description $M(0;(\alpha_{1},\beta_{1}),\\ (1,1), (\alpha_{i}, \beta_{i} - \alpha_{i}))$ with $\frac{\alpha_{i}}{\alpha_{i} - \beta_{i}} > \frac{\alpha_{1}}{\beta_{1}}$. 

We show that this diagram is quasi-alternating at the only crossing in the second integer tangle. It is easy to see that $L'_{0}$ is a 4-plate and by \cite{BS} we know that it is a rational link hence it is quasi-alternating. Also, $L'_{1}$ is a connected sum of two rational links hence it is quasi-alternating by \cite[Lemma.\,2.3]{CK}. To show the additivity of the determinant at this crossing, we use the main result of \cite{QQ} to obtain the determinant of the links $L'$ and $L'_{0}$
\[
\det(L') = \alpha_{1}\alpha_{i} + \alpha_{i}\beta_{1} - \alpha_{1}(\alpha_{i} - \beta_{i});
\]
\[
\det(L'_{0}) =|\alpha_{i}\beta_{1} - \alpha_{1}(\alpha_{i} - \beta_{i})|;
\]

Also, we know that
\[
\det(L'_{1})= \det(K_{\alpha_{1}, \beta_{1}}\#K_{\alpha_{i}, \beta_{i} - \alpha_{i}}) = \det(K_{\alpha_{1}, \beta_{1}})\det(K_{\alpha_{i}, \beta_{i} - \alpha_{i}}) = \alpha_{1}\alpha_{i}.
\]

For the determinant to be additive at that crossing, we have to have
\begin{align*}
\alpha_{i}\beta_{1} - \alpha_{1}(\alpha_{i} - \beta_{i}) & >  0 \\
\alpha_{i}\beta_{1} &  > \alpha_{1}(\alpha_{i} - \beta_{i})  \\
\frac{\alpha_{i}}{\alpha_{i} - \beta_{i}} & >  \frac{\alpha_{1}}{\beta_{1}}.
\end{align*}
Now since $L'$ is quasi-alternating at the given crossing, then we apply theorem \ref{new2} to obtain a quasi-alternating Montesinos link with the given description.
\item Again it is enough to obtain the result for the mirror image of the given link. The mirror image of the given link has a description $M(-r  ;(\alpha_{1},-\beta_{1}), (\alpha_{2},-\beta_{2}), \ldots,(\alpha_{r},-\beta_{r}))$. Now this link has an equivalent description in standard form $M(0;(\alpha_{1},\alpha_{1} - \beta_{1}), (\alpha_{2},\alpha_{2} - \beta_{2}), \ldots, (\alpha_{r},\alpha_{r} -\beta_{r}))$. Finally, the result follows from the third case.
\end{enumerate}

\end{proof}

\begin{prop}
The above theorem generalizes the sufficient condition of \cite[Theorem.\,1.4]{G}.
\end{prop}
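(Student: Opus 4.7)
The plan is to convert any pretzel link satisfying the sufficient half of \cite[Theorem.\,1.4]{G} into a standard Montesinos presentation, and then to verify that one of the four hypotheses of Theorem \ref{main} holds. I would first normalize the pretzel $P(p_1,\ldots,p_n,-q_1,\ldots,-q_m)$ with $p_i,q_j\geq 2$ by noting that each positive entry $p_i$ already gives a standard tangle $(p_i,1)$ of slope $1/p_i$, while each negative entry $-q_j$ has slope $-1/q_j$, which is brought into $(0,1)$ via $-1/q_j \equiv (q_j-1)/q_j \pmod 1$, contributing $+1$ to $e$ per negative entry (so that the invariant $e_0=e-\sum \beta_j/\alpha_j$ is preserved). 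This yields the standard form
\[
M\bigl(m;(p_1,1),\ldots,(p_n,1),(q_1,q_1-1),\ldots,(q_m,q_m-1)\bigr),
\]
with $r=n+m$ and $e=m$; the mirror image standardizes analogously to $M(n;(p_i,p_i-1),(q_j,1))$ with $e=n$.

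Next I would dispatch the cases of Greene's sufficient criterion. The alternating case, where all entries have the same sign, gives $e=0$ (or, for the mirror, $e\le 0$), matching condition (1) of Theorem \ref{main}. The non-alternating case $P(p_1,\ldots,p_n,-q)$ with $q>\min_i p_i$ corresponds to $M(1;(p_1,1),\ldots,(p_n,1),(q,q-1))$ with $e=1$; choosing $i=n+1$ in condition (3) of Theorem \ref{main}, one computes $\alpha_i/(\alpha_i-\beta_i)=q/(q-(q-1))=q$ and $\min_{j\neq i}\alpha_j/\beta_j=\min_i p_i$, so Greene's inequality becomes the hypothesis of case (3) verbatim. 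The symmetric case $P(p,-q_1,\ldots,-q_m)$ with $p>\min_j q_j$ is handled by first passing to the mirror image, whose standard form $M(1;(p,p-1),(q_1,1),\ldots,(q_m,1))$ again has $e=1$; condition (3) with $i=1$ now gives $\alpha_1/(\alpha_1-\beta_1)=p$ versus $\min_{j\neq 1}\alpha_j/\beta_j=\min_j q_j$, recovering Greene's inequality.

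The main delicacy is the bookkeeping in the standardization step: one has to confirm that the unit shifts of $e$ produced while moving each $-1/q_j$ into $(0,1)$ interact correctly with the two ratios $\alpha_i/(\alpha_i-\beta_i)$ and $\alpha_j/\beta_j$ appearing in condition (3), and also that cyclic permutation of the tangles lets one place the distinguished negative (resp.\ positive) tangle at the chosen index $i=n+1$ (resp.\ $i=1$). Once this is transparent, the inclusion of Greene's sufficient condition inside Theorem \ref{main} is immediate; and since Theorem \ref{main} allows arbitrary rational tangles in place of the single-crossing integer tangles of a pretzel, the generalization is strict.
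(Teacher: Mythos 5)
Your overall method---put each pretzel link satisfying Greene's sufficient condition into Montesinos standard form and match it against one of the four conditions of Theorem \ref{main}---is the same as the paper's, and the computations you do carry out are correct: $P(p_1,\ldots,p_n,-q_1,\ldots,-q_m)$ standardizes to $M(m;(p_1,1),\ldots,(p_n,1),(q_1,q_1-1),\ldots,(q_m,q_m-1))$; for $P(p_1,\ldots,p_n,-q)$ with $q>\min\{p_i\}$ condition (3) holds with $i$ the index of the tangle $(q,q-1)$; and the single-positive-tassel case is correctly routed through the mirror image.

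The gap is that the case analysis you dispatch is not the case analysis of \cite[Theorem.\,1.4]{G}: you have substituted for it the two sufficient conditions of Champanerkar--Kofman (essentially the Corollary already proved earlier in the paper from Theorem \ref{new2}) together with the alternating case. As the paper's own proof makes clear, Greene's theorem contains a further sufficient condition---its second case---whose links carry the Montesinos description $M(-m+1;(p_1,1),\ldots,(p_n,1),(q_1,-1),\ldots,(q_m,-1))$ with $m\geq 2$ negative tangles and all $q_j\geq 3$; the paper singles this out as the only case that does \emph{not} follow ``directly'' from standardization, and it is exactly the case absent from your argument. For it one standardizes to $M(1;(p_1,1),\ldots,(p_n,1),(q_1,q_1-1),\ldots,(q_m,q_m-1))$ and checks condition (3) of Theorem \ref{main} via $q_1>\min\{p_1,\ldots,p_n,\tfrac{q_2}{q_2-1},\ldots,\tfrac{q_m}{q_m-1}\}$, which holds automatically because $q_j\geq 3$ forces $\tfrac{q_j}{q_j-1}\leq \tfrac{3}{2}<q_1$. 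Note that this family genuinely cannot be folded into your two cases: a plain pretzel with $n\geq 2$ positive and $m\geq 2$ negative tassels standardizes to $e=m$ with $1<e<r$, which the paper subsequently shows is never quasi-alternating, so the shift from $e=0$ to $e=-m+1$ in Greene's second case is essential. Without working from the actual statement of \cite[Theorem.\,1.4]{G} and covering that case, the proposition is not established.
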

\begin{proof}
All cases of \cite[Theorem.\,1.4]{G} follow directly from the cases of the above theorem by putting the pretzel link into a standard form except the second case. This case follows from the second case of the above theorem since the link has a description $M(-m+1;(p_{1},1),(p_{2},1),\ldots,(p_{n},1),(q_{1},-1),(q_{2},-1),\ldots, (q_{m},-1))$ that has an equivalent description $M(1;(p_{1},1),(p_{2},1),\ldots,(p_{n},1),(q_{1},q_{1}-1),(q_{2},q_{2}-1),\ldots, (q_{m},q_{m}-1))$ with $q_{1} > \min \{p_{1},p_{2}, \ldots, \\ p_{n},\frac{q_{2}}{q_{2}-1},\ldots,\frac{q_{m}}{q_{m}-1} \}$ since $q_{j} \geq 3$ for $1\leq j \leq m$.
\end{proof}

\begin{prop}
All classes of quasi-alternating Montesinos links detected by Widmar in \cite[Theorem.\,1.2 and the only Remark]{W} fall into the third case of
 theorem \ref{main}.
\end{prop}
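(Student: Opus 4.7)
The plan is to go through each of the classes of quasi-alternating Montesinos links listed in \cite[Theorem~1.2 and the only Remark]{W}, put a representative of each class into the standard form $M(e;(\alpha_{1},\beta_{1}),\ldots,(\alpha_{r},\beta_{r}))$ with $0<\beta_{i}/\alpha_{i}<1$, and then verify that $e=1$ together with the inequality
\[
\frac{\alpha_{i}}{\alpha_{i}-\beta_{i}} \ > \ \min\left\{ \frac{\alpha_{j}}{\beta_{j}}\ \Big|\ j\neq i\right\}
\]
for at least one index $i$.

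First I would translate each of Widmar's continued-fraction entries $[a_{1}^{(j)},\dots,a_{k_{j}}^{(j)}]$ into a rational slope $\beta_{j}/\alpha_{j}$ using the recursion of Section~2, and then use the classification theorem together with the identity $e_{0}=e-\sum\beta_{j}/\alpha_{j}$ to absorb the integer parts of the tangles into $e$. Because Widmar's hypotheses fix the sign pattern of his tangles, I expect this reduction to land on exactly $e=1$ in every case (passing to the mirror image if necessary, which replaces $(e,\beta_{j}/\alpha_{j})$ by $(r-e,(\alpha_{j}-\beta_{j})/\alpha_{j})$). The distinguished tangle on which Widmar places his size assumption would then play the role of the index $i$ in the third case of Theorem~\ref{main}.

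Next, I would check that Widmar's size hypothesis on the distinguished tangle is equivalent, after the shifts $\beta_{j}\mapsto \beta_{j}+k\alpha_{j}$ that bring the form to standard, to precisely the inequality $\alpha_{i}/(\alpha_{i}-\beta_{i}) > \min_{j\neq i}\alpha_{j}/\beta_{j}$. In each of Widmar's cases this should collapse to an elementary comparison of two positive rationals that follows directly from his stated bound on the continued-fraction coefficients.

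The main obstacle I foresee is essentially bookkeeping: Widmar's presentation is not in standard form, so one must carefully track how integer shifts, orientation reversals, and mirror images transform his inequality. I would organize the verification class by class from \cite[Theorem~1.2 and the Remark]{W}, in each case exhibiting the index $i$ explicitly and reducing the comparison to the positive rational inequality appearing in case (3) of Theorem~\ref{main}.
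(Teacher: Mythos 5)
Your strategy is exactly the one the paper follows: take each of Widmar's classes, rewrite it in standard form, observe that the normalization lands on $e=1$, and check that Widmar's hypothesis translates into the inequality $\frac{\alpha_{i}}{\alpha_{i}-\beta_{i}} > \min\{\frac{\alpha_{j}}{\beta_{j}} \mid j\neq i\}$ of case (3). So there is no disagreement about the route. The problem is that your proposal stops at the level of a plan: every step is phrased as ``I would translate\ldots'', ``I expect\ldots'', ``I would check\ldots'', and none of the four classes is actually converted or verified. Since the entire content of this proposition \emph{is} the case-by-case verification, a proof that defers all four cases has not yet proved anything. In particular, the assertion that the reduction ``lands on exactly $e=1$ in every case'' is stated as an expectation rather than established, and the claim that Widmar's size hypothesis ``should collapse'' to the required inequality is exactly the point that needs to be demonstrated.

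To see what is actually required, take Widmar's first class $L(a_{1}a_{2}, R, -n)$ with $1+a_{1}(a_{2}-n)<0$, where $R$ is a positive rational tangle of slope $\frac{\beta}{\alpha}$. One must compute that its standard form is $M(1;(a_{1}a_{2}+1,a_{1}),(\alpha,\beta),(n,n-1))$ --- note the negative integer tangle $-n$ becomes $(n,n-1)$ and contributes the shift that produces $e=1$. Taking $i$ to be the index of the tangle $(n,n-1)$, one has $\frac{\alpha_{i}}{\alpha_{i}-\beta_{i}} = n$, and the hypothesis $1+a_{1}(a_{2}-n)<0$ rearranges to $n > \frac{a_{1}a_{2}+1}{a_{1}} \geq \min\{\frac{a_{1}a_{2}+1}{a_{1}}, \frac{\alpha}{\beta}\}$, which is precisely the case-(3) condition. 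Analogous explicit computations are needed for the other three classes $L(a_{1}a_{2}, R, (-c_{1})(-c_{2}))$, $L(a_{1}a_{2}a_{3}, R, -n)$, and $L(a_{1}a_{2}a_{3}, R, (-c_{1})(-c_{2})(-c_{3}))$, each with its own standard form (e.g.\ the tangle $a_{1}a_{2}a_{3}$ yields $(a_{1}a_{2}a_{3}+a_{1}+a_{3},\, a_{1}a_{2}+1)$) and its own elementary inequality manipulation. Until those computations are written out, the proposition remains unproved.
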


\begin{proof}
We prove each case separate, but first we assume $R$ is any positive rational tangle of slope $\frac{\beta}{\alpha}$.
\begin{enumerate}
\item The link $L = L(a_{1}a_{2}, R, -n)$ with $ 1 + a_{1}(a_{2} - n) < 0 $ has an equivalent description in standard form $M(1;(a_{1}a_{2}+1, a_{1}), (\alpha,\beta),(n, n-1))$. So we have
\begin{align*}
1 + a_{1}(a_{2} - n) < 0  & \iff a_{1}(a_{2} - n) < -1 \\
& \iff a_{1}a_{2} - na_{1} <  -1 \\
& \iff -na_{1} < -(1 +  a_{1}a_{2}) \\
& \Rightarrow  n > \frac{a_{1}a_{2} + 1}{a_{1}} \geq \min \{\frac{a_{1}a_{2} + 1}{a_{1}}, \frac{\alpha}{\beta} \};
\end{align*}

\item The link $L = L(a_{1}a_{2}, R, (-c_{1})(-c_{2}))$ with $a_{2} \leq c_{2}$ and $a_{1} > c_{1}$ has an equivalent description in standard form $M(1;(a_{1}a_{2}+1, a_{1}), (\alpha,\beta),(c_{1}c_{2}+1,c_{1}c_{2} + 1 - c_{1})$. So we have
\begin{align*}
a_{2} \leq c_{2} \ \text{and} \ a_{1} > c_{1} & \Rightarrow a_{1}c_{1}c_{2} + a_{1} > c_{1}a_{1}a_{2} + c_{1} \\ & \Rightarrow  \frac{c_{1}c_{2} + 1}{c_{1}} > \frac{a_{1}a_{2} + 1}{a_{1}} \geq \min \{\frac{a_{1}a_{2} + 1}{a_{1}}, \frac{\alpha}{\beta} \};
\end{align*}

\item The link $L = L(a_{1}a_{2}a_{3}, R, -n)$ with $a_{3} < n$ has an equivalent description in standard form $M(1;(a_{1}a_{2}a_{3} + a_{1} + a_{3}, a_{1}a_{2} + 1), (\alpha,\beta),(n,n-1))$. So we have
\begin{align*}
n > a_{3} & \Rightarrow n > a_{3} + \frac{a_{1}}{a_{1}a_{2} + 1} \\
& \Rightarrow  n >  \frac{a_{1}a_{2}a_{3} + a_{1} + a_{3}}{a_{1}a_{2} + 1}  \geq \min \{ \frac{a_{1}a_{2}a_{3} + a_{1} + a_{3}}{a_{1}a_{2} + 1}, \frac{\alpha}{\beta} \};
\end{align*}
\item The link $L = L(a_{1}a_{2}a_{3}, R, (-c_{1})(-c_{2})(-c_{3}))$ with $ \frac{1+a_{1}a_{2}}{1 +c_{1}c_{2}} > \frac{a_{1} + a_{3} + a_{1}a_{2}a_{3}}{c_{1}+c_{3} + c_{1}c_{2}c_{3}}$ has an equivalent description in standard form $M(1;(a_{1}a_{2}a_{3} + a_{1} + a_{3}, a_{1}a_{2} + 1), (\alpha,\beta),(c_{1}c_{2}c_{3} + c_{1} + c_{3}, (c_{1}c_{2}c_{3} + c_{1} + c_{3}) - ( c_{1}c_{2} + 1)))$. So we have
\begin{align*}
\frac{1+a_{1}a_{2}}{1 +c_{1}c_{2}} > \frac{a_{1} + a_{3} + a_{1}a_{2}a_{3}}{c_{1}+c_{3} + c_{1}c_{2}c_{3}} & \iff \frac{c_{1}+c_{3} + c_{1}c_{2}c_{3}}{1 +c_{1}c_{2}} > \frac{a_{1} + a_{3} + a_{1}a_{2}a_{3}}{1+a_{1}a_{2}}\\
&\Rightarrow \frac{c_{1}+c_{3} + c_{1}c_{2}c_{3}}{1 +c_{1}c_{2}} > \frac{a_{1} + a_{3} + a_{1}a_{2}a_{3}}{1+a_{1}a_{2}} \geq \min \{ \frac{a_{1} + a_{3} + a_{1}a_{2}a_{3}}{1+a_{1}a_{2}}, \frac{\alpha}{\beta} \}.
\end{align*}
\end{enumerate}
\end{proof}

The following proposition generalizes \cite[Proposition.\,3.1]{CK}:
\begin{prop}
The Montesinos link $ L = M(e;(\alpha_{1},\beta_{1}),(\alpha_{2},\beta_{2}), \ldots,(\alpha_{r},\beta_{r}) )$ in standard form is not
 quasi-alternating if $ 1 < e < r $.

\end{prop}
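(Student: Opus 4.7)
The plan is to derive a contradiction by strong induction on the complexity $e + \sum_{i=1}^{r} \alpha_{i}$, using only the determinant formula from \cite{QQ} and the recursive definition of quasi-alternating links; this is the natural generalization of the argument Champanerkar-Kofman use in \cite[Proposition 3.1]{CK}. Suppose for contradiction that $L = M(e;(\alpha_{1},\beta_{1}),\ldots,(\alpha_{r},\beta_{r}))$ is in standard form with $1 < e < r$ and is quasi-alternating at some crossing $c$ of its standard Montesinos diagram. Then $\det(L_{0}), \det(L_{1}) \geq 1$, both $L_{0}, L_{1} \in \mathcal{Q}$, and $\det(L) = \det(L_{0}) + \det(L_{1})$. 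Combining this with the closed form
\[
\det(L) = \prod_{i=1}^{r} \alpha_{i} \cdot \left| e - \sum_{i=1}^{r} \frac{\beta_{i}}{\alpha_{i}} \right|
\]
should force an impossibility.

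The argument splits according to where the crossing $c$ lies in the standard diagram. If $c$ is one of the $e$ horizontal twists, then one smoothing is $L_{0} = M(e-1;(\alpha_{1},\beta_{1}),\ldots,(\alpha_{r},\beta_{r}))$ while the other resolves the twist block and yields, after normalization, either a Montesinos link with fewer twists/tangles or a connected sum of rational links whose determinant can be written down. When $e \geq 3$ this $L_{0}$ is itself in standard form with $1 < e-1 < r$, so the inductive hypothesis kills it; when $e = 2$ one instead has $L_{0} = M(1;\ldots)$, which must be checked directly against Theorem \ref{main}(3). If $c$ instead lies inside the $i$-th rational tangle, Lemma \ref{new} together with the recursions for $T(m)$ and $\overleftarrow{T}(m)$ gives explicit closed forms for $\det(L_{0})$ and $\det(L_{1})$ in terms of the modified continued fraction obtained by reducing some $a_{j}$ by one; plugging these into the additivity equation and clearing denominators rearranges to an inequality on $e$ that is violated precisely in the range $1 < e < r$.

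The main obstacle will be the base cases of the induction, where the inductive hypothesis is unavailable: the minimal standard-form examples such as $M(2;(2,1),(2,1),(2,1))$ and its small cousins must be excluded by a direct crossing-by-crossing determinantal calculation, echoing the direct check in the pretzel case of \cite[Proposition 3.1]{CK}. A secondary technical difficulty is that smoothing inside a tangle or in the twist block can produce Montesinos-like diagrams that are not yet in standard form; these must be renormalized by the $(\alpha_{i},\beta_{i}) \mapsto (\alpha_{i},\alpha_{i}-\beta_{i})$ shift (with a compensating change in $e$) before the condition $1 < e' < r'$ on the resulting parameters can be verified, with particular care needed in the edge sub-cases $e = 2$ and $e = r-1$ where the smoothings just land in the conditional ranges of Theorem \ref{main}(3)--(4) rather than strictly inside the forbidden zone.
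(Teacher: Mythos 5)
Your strategy has a fundamental gap that no amount of case analysis or induction can repair: being quasi-alternating is a property of the link, defined by the existence of \emph{some} diagram with \emph{some} crossing at which the three conditions hold. Assuming for contradiction that $L$ is ``quasi-alternating at some crossing $c$ of its standard Montesinos diagram'' and then splitting into cases according to where $c$ sits in that diagram does not exhaust the possibilities --- the witnessing crossing could live in an entirely different diagram of $L$, so deriving a contradiction in every case you list would still not show $L \notin \mathcal{Q}$. Moreover, determinant additivity at a crossing is far too weak an obstruction: it holds at crossings of many non-quasi-alternating links, so the claim that the closed formula for $\det(L)$ ``should force an impossibility'' is unsubstantiated and, in general, false. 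Your appeal to \cite[Proposition.\,3.1]{CK} as a model is also misplaced: Champanerkar and Kofman do not prove their non-quasi-alternating statement by induction on diagrams, but by exhibiting an invariant-theoretic obstruction.

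The paper's proof is short and of a completely different nature. It rewrites the link as $M(0;(\alpha_{1},\beta_{1}-\alpha_{1}),\ldots,(\alpha_{e},\beta_{e}-\alpha_{e}),(\alpha_{e+1},\beta_{e+1}),\ldots,(\alpha_{r},\beta_{r}))$, observes that this diagram is a non-alternating sum of two strongly alternating tangles, hence an adequate non-alternating diagram by Lickorish--Thistlethwaite \cite{LT}, hence the link is homologically thick by \cite[Proposition.\,5.1]{K}; since quasi-alternating links are homologically thin \cite{MO}, $L$ cannot be quasi-alternating. The lesson is that to prove a link is \emph{not} quasi-alternating one must reach for an invariant obstruction (thickness of Khovanov or knot Floer homology, or the like), not for the recursive definition itself.
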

\begin{proof}
The Montesinos link in this case has an equivalent description $M(0;(\alpha_{1},\beta_{1} - \alpha_{1}),(\alpha_{2},\beta_{2} -\alpha_{2}), \ldots, (\alpha_{|e|},\beta_{|e|} -\alpha_{|e|}),(\alpha_{|e|+1},\beta_{|e|+1}), \ldots, (\alpha_{r},\beta_{r}) )$ that is a  non-alternating diagram obtained by summing two strongly alternating 4-end tangles. Hence, it is adequate nonalternating diagram by \cite[Page.\,532]{LT} and by \cite[Proposition.\,5.1]{K} the link is not quasi-alternating since it is homologically thick.

\end{proof}

Finally, we give some examples of Montesinos knots that are known to be not quasi-alternating.

\begin{ex}
The knots $9_{46}$ and $11n_{50}$ are Montesinos knots that are given by the descriptions in standard form $M(1;(3,1),(3,1), (3,2))$ and $M(1;(5,3),(5,2), (3,1))$ respectively. These two knots do not satisfy the condition mentioned in the third case of theorem \ref{main} and they are known to be not quasi-alternating the first knot by \cite[Proposition.\,2.2]{G} since it is the pretzel knot $P(3,3,-3)$ and the second knot by \cite[Theorem.\,1.3]{G}.
\end{ex}

\begin{ex}
The following list of Montesinos knots in standard form are candidates for homologically thin non quasi-alternating knots as stated in \cite[Page.\,4]{JS}. We checked by hand that these knots do not satisfy the condition mentioned in the third case of theorem \ref{main}.
\begin{enumerate}
\item $12n_{145} = M(1;(5,3),(5,2),(4,1))$;
\item $13n_{1408} = M(1;(7,4),(5,2),(5,2))$;
\item $13n_{2006} = M(1;(7,4),(7,3),(3,1))$;
\item $13n_{3142} = M(1;(5,3),(11,4),(3,1))$.
\end{enumerate}

\end{ex}

We end this paper with the following conjecture that is supported by the above two examples.
\begin{conj}
Theorem \ref{main} characterizes all quasi-alternating Montesinos links in standard form. In other words, any quasi-alternating Montesinos link has to satisfy one of the conditions of the above theorem.
\end{conj}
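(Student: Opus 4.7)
\bigskip

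\noindent\textbf{Proof proposal for the conjecture.}

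The plan is to combine the reductions already built into Theorem \ref{main} with the thickness obstruction used in the preceding proposition. First I would carve the problem into a single essential case. The hypothesis that $L = M(e;(\alpha_1,\beta_1),\ldots,(\alpha_r,\beta_r))$ is in standard form constrains $e$ to lie in $\{0,1,\ldots,r,r+1\}$ after the trivial reductions, since outside this range case (1) or (2) of Theorem \ref{main} already applies. The proposition preceding the conjecture removes $1<e<r$. The mirror-image argument used in cases (2) and (4) of Theorem \ref{main} converts a standard-form link with $e=r$ violating (4) into a standard-form link with $e=1$ violating (3). Hence the conjecture reduces to the following single statement: if $L$ is in standard form with $e=1$ and
\[
\frac{\alpha_i}{\alpha_i-\beta_i}\;\leq\;\min\Bigl\{\tfrac{\alpha_j}{\beta_j}\,\Bigm|\, j\neq i\Bigr\}\qquad\text{for every } i,
\]
then $L$ is not quasi-alternating.

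My main strategy is to show that under this hypothesis $L$ admits an adequate non-alternating diagram obtained as a sum of two strongly alternating $4$-end tangles, exactly as in the proposition for $1<e<r$; then \cite[Proposition.\,5.1]{K} forces $L$ to be homologically thick and hence not quasi-alternating. Concretely, I would distribute the single positive integer twist among the $r$ rational tangles and ask whether it can be absorbed into some tangle $i$ in such a way that the resulting Montesinos diagram is an alternating sum of tangles, all of the same sign. Absorbing the twist into the $i$-th tangle replaces $(\alpha_i,\beta_i)$ by a rational tangle of slope $\tfrac{\beta_i}{\alpha_i}+1$, which has continued fraction with some positive and some negative entries unless the slope comparison $\tfrac{\alpha_i}{\alpha_i-\beta_i}>\tfrac{\alpha_j}{\beta_j}$ holds for the minimum index $j$. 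The failure of the displayed inequality for every $i$ would therefore force the resulting diagram, after splitting $L$ along a separating circle in the Montesinos projection sphere, to be a non-trivial connected sum of two strongly alternating rational $4$-end tangles of opposite characters, hence adequate but not alternating.

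The technical heart, and the step I expect to be the main obstacle, is proving that under the all-indices failure of the inequality \emph{every} diagram realising $L$ (not just the obvious standard one) is homologically thick, since quasi-alternating is a property of the link, not of a chosen diagram. To handle this I would use the classification theorem at the beginning of Section 2 to enumerate the admissible standard forms of $L$, and then verify that the inequality hypothesis is preserved under cyclic permutation and reversal of the tuple $(\tfrac{\beta_i}{\alpha_i})$. I would also need to check that the determinant identity Lemma \ref{new} forbids the existence of any \emph{other} crossing (including those inside the rational tangles) at which the determinant splits additively into two quasi-alternating smoothings; the determinant formula
\[
\det(L)=\Bigl|\,\prod_i\alpha_i\Bigl(e-\sum_i\tfrac{\beta_i}{\alpha_i}\Bigr)+\sum_i\beta_i\prod_{j\neq i}\alpha_j\,\Bigr|
\]
from \cite{QQ} should allow one to compare $\det(L)$ with $\det(L_0)+\det(L_1)$ for every smoothing of a crossing inside some $\alpha_i$-tangle and rule additivity out using the slope inequalities.

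The subsidiary steps would be: (a) a careful analysis of the boundary cases of the inequality, i.e.\ equality, where the determinant argument becomes delicate and one must show the additive splitting still fails because one of $L_0,L_1$ would need to be quasi-alternating with determinant zero; (b) verifying the conjecture on the four examples $12n_{145}$, $13n_{1408}$, $13n_{2006}$, $13n_{3142}$ listed in the last example, which provides a concrete sanity check for the adequate-diagram construction; and (c) ensuring the mirror reduction from $e=r$ to $e=1$ indeed preserves standard form, which follows from the same manipulation used in case (4) of Theorem \ref{main}. The obstacle throughout is diagram-independence: Khovanov thickness is an invariant, but producing an adequate diagram requires exhibiting a particular presentation, and for Montesinos links with only three rational tangles one is essentially forced into the standard diagram, where the argument should succeed cleanly.
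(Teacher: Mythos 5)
First, a point of order: the statement you are proving is presented in the paper as a \emph{conjecture}, not a theorem. The paper offers no proof of it at all --- only supporting evidence, namely two examples of Montesinos knots that fail condition (3) of Theorem \ref{main} and are known or believed not to be quasi-alternating. So there is no proof in the paper to compare yours against; I can only assess your strategy on its own terms. Your opening reduction is sound and matches the structure the paper sets up: cases (1) and (2) of Theorem \ref{main} dispose of $e\leq 0$ and $e\geq r+1$, the proposition preceding the conjecture excludes $1<e<r$, and mirror image exchanges the $e=1$ and $e=r$ cases, so everything reduces to showing that an $e=1$ standard-form link for which the inequality in condition (3) fails for every $i$ is not quasi-alternating.

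The fatal gap is the central tool you choose for that remaining case. You propose to exhibit an adequate non-alternating diagram and invoke \cite[Proposition.\,5.1]{K} to conclude homological thickness, hence non-quasi-alternating. But the paper's final example lists four Montesinos knots ($12n_{145}$, $13n_{1408}$, $13n_{2006}$, $13n_{3142}$) that lie exactly in your remaining case and are, per \cite{JS}, candidates for homologically \emph{thin} non-quasi-alternating knots; similarly $9_{46}=P(3,3,-3)$ fails condition (3) and is thin, and Greene \cite{G} had to use the Heegaard Floer $d$-invariants of the branched double cover, not a thickness obstruction, to show it is not quasi-alternating. A thin link cannot be proved non-quasi-alternating by any argument whose conclusion is ``this link is thick,'' so your main strategy cannot possibly cover all the links it must cover --- your own sanity check (b) would already expose this. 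This is precisely why the statement is a conjecture rather than a corollary of the $1<e<r$ proposition. Two secondary problems: (i) ruling out an additive determinant splitting at every crossing of the standard diagram, as you propose via Lemma \ref{new} and the determinant formula, is not an obstruction to quasi-alternation, since the recursive definition quantifies over all diagrams of the link, and you offer no mechanism (the classification theorem controls standard forms, not arbitrary diagrams) to pass from one diagram to all; (ii) even in the genuinely thick subcases, absorbing the single positive twist into the $i$-th tangle yields slope $\frac{\beta_i}{\alpha_i}+1>1$, and the claim that the resulting diagram is a sum of two strongly alternating $4$-end tangles is asserted rather than constructed --- the sign bookkeeping needed to verify strong alternation is exactly where the inequality hypothesis would have to enter, and it is not carried out.
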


\section{Acknowledgment}
The first author thanks the Max Planck Institute for Mathematics for the kind hospitality during the course of this work.

\end{document}